\newcounter{cnt1}
\newcounter{cnt2}
\newcounter{cnt3}
\newcommand{\blr}{\begin{list}{$($\roman{cnt1}$)$} {\usecounter{cnt1}
        \setlength{\topsep}{0pt} \setlength{\itemsep}{0pt}}}
\newcommand{\bla}{\begin{list}{$($\alph{cnt2}$)$} {\usecounter{cnt2}
        \setlength{\topsep}{0pt} \setlength{\itemsep}{0pt}}}
\newcommand{\bln}{\begin{list}{$($\arabic{cnt3}$)$} {\usecounter{cnt3}
                \setlength{\topsep}{0pt} \setlength{\itemsep}{0pt}}}
\newcommand{\el}{\end{list}}
\newtheorem{Thm}{Theorem}[section]
\newtheorem{Lem}[Thm]{Lemma}
\newtheorem{Def}[Thm]{Definition}
\newtheorem{Exm}[Thm]{Example}
\newtheorem{Rem}[Thm]{Remark}
\newtheorem{Cor}[Thm]{Corollary}
\newtheorem{Note}[Thm]{Note}
\begin{document}
\title{On Lie algebroid over algebraic spaces}
\author{Ashis Mandal and Abhishek Sarkar}
\begin{abstract}
We consider Lie algebroids over algebraic spaces (in short we call it as $a$-spaces) by considering the sheaf of Lie-Rinehart algebras. We discuss about  properties of universal enveloping algebroid  $\mathscr{U}(\mathcal{O}_X,\mathcal{L})$  of a Lie algebroid $\mathcal{L}$ over an $a$-space $(X, \mathcal{O}_X)$.  This is done by sheafification of the presheaf of universal enveloping algebras for Lie-Rinehart algebras.  We review the extent to which structure of the universal enveloping algebroid of Lie algebroids (over special $a$-spaces) resembles a sheaf of bialgebras. 
In the sequel we present a version of  Poincar\'e-Birkhoff-Witt theorem and Cartier-Milnor-Moore theorem for the Lie algebroid. 
	
\end{abstract}
\footnote{AMS Mathematics Subject Classification : $17$B$35$, $16$T$10$.
	
}
\keywords{ Lie algebroids, Lie-Rinehart algebras, tangent sheaf, universal enveloping algebra}
\maketitle
\section{Introduction}
The notion of Lie algebroid plays a prominent role in geometry as generalized infinitesimal symmetries of spaces 
 related to the corresponding global symmetries of spaces, called Lie groupoid (see \cite{KM}). 
 Lie algebroid over a smooth manifold is a joint generalization of tangent vector bundle over the manifold  and Lie algebras, whose algebraic analogue is known as Lie-Rinehart algebra \cite{UB,MK,LM}. 

There are many instances for consideration  of spaces obtained as the solution space of 
a system of first order partial differential equations (generalized involutive distribution), the vanishing set of a system of holomorphic functions in complex geometry and the vanishing set of a system of polynomials in algebraic geometry. At places, the solution space may not be a smooth object and it is considered as basic analytic space or affine variety. In the presence of singularity, an  analogue of tangent bundle is the tangent sheaf. It is the sheaf of derivations over its structure sheaf. Another important sheaf associated to these spaces is the sheaf of logarithmic vector fields (vector fields tangent to these subspaces). In both cases we find the common underlying structure of coherent sheaf of Lie-Rinehart algebras, which we consider as a Lie algebroid. The Lie algebroid over analytic spaces  and over algebraic varieties are joint generalization of the tangent sheaf \cite{UB,BP,SR} and the sheaf of Lie algebras. We consider Lie algebroids over algebraic spaces $(X, \mathcal{O}_X)$ (in short  $a$-spaces) where $\mathcal{O}_X $ is a sub-sheaf of algebras of the sheaf of continuous functions $C^0$ on $X$, it combines the three types of base spaces as special cases \cite{AM,SR}.

Unlike to the case of (smooth) vector bundles over a smooth manifold, the space of global sections does not capture whole information for a holomorphic (algebraic) vector bundle over a complex manifold (algebraic variety). In the later case, we need to consider sheaf of sections \cite{MK,CW} of the corresponding vector bundles. The necessity is mostly due to lack of the notion of bump functions and partition of unity, which is present automatically for the case of real vector bundles over real smooth manifold. One requires a sheaf-theoretic approach for analytic and algebraic geometry, even for not necessarily singular cases. In doing global calculus over real smooth manifolds, the sheaf-theoretic approach opens up a new way of doing classical differential geometry \cite{SR}. But in the other cases it becomes mandatory except special situations like Stein manifolds and affine varieties. 

The universal enveloping algebra of a Lie-Rinehart algebra is an associative, non-commutative algebra (except the trivial case when it become symmetric algebra).  In \cite{MM}, I. Moerdijk and J. Mr\v{c}un showed that the universal enveloping algebra of a $(\mathbb{K},R)$-Lie-Rinehart algebra has a $R/{\mathbb{K}}$-bialgebra structure. As a special case they deduced $C^\infty(M)/{\mathbb{R}}$-bialgebra structure for Lie algebroid over a smooth manifold $M$.  As in the universal enveloping algebra of a Lie algebra, the bialgebra is equipped with a canonical ascending filtration and  by using the co-algebra structure we get an ascending primitive filtration which coincides for the case of projective Lie-Rinehart algebras. They extended the Cartier-Milnor-Moore theorem from Lie algebras to Lie algebroids over smooth manifolds and more generally for projective Lie-Rinehart algebras.

In this paper we discuss properties of universal enveloping algebroid  denoted by $\mathscr{U}(\mathcal{O}_X,\mathcal{L})$  of a Lie algebroid $\mathcal{L}$ over an $a$-space $ (X, \mathcal{O}_X)$.  Here $\mathscr{U}(\mathcal{O}_X,\mathcal{L})$ is obtained by sheafification of the presheaf of universal enveloping algebras of Lie-Rinehart algebras associated with the space of sections of $\mathcal{L}$. In section $2$, we define the notion of Lie algebroids over algebraic spaces. Then in section $3$, we present a version of Poincare-Birkhoff-Witt theorem for universal enveloping algebroid $\mathscr{U}(\mathcal{O}_X,\mathcal{L})$  of a Lie algebroid $\mathcal{L}$ over $(X, \mathcal{O}_X)$.  In section $4$, we consider the sheaf of logarithmic differential operators and Weyl algebra associated with some principal divisors, namely nilpotent cone and normal crossing divisor. Next, in section $5$, we present a sheaf of generalized bialgebra structure on $\mathscr{U}(\mathcal{O}_X,\mathcal{L})$ which we call $\mathcal{O}_X/{\mathbb{K}_X}$- bialgebra.  In section $6$,  we present a version of Cartier-Milnor-Moore theorem for locally free Lie algebroids over some special type of $a$-spaces. This yields a categorical equivalence between  special subcategories of the Lie algebroids and of the generalized sheaf of bialgebras over special types of $a$-spaces.
\medskip

\section{Lie algebroids over special $a$-spaces}
\subsection{Algebraic spaces or $a$-spaces} 
A class of objects in geometry, for example real $C^k$-manifolds, analytic spaces, algebraic varieties, schemes all are described as locally ringed spaces \cite{CW}. Here, we consider a special type of it which is called algebraic spaces and in short we like to refer as $a$-spaces (see \cite{AM}).  It facilitates to study all the three kinds of geometry (smooth, analytic and algebraic) in a common framework. The notation $\mathbb{K}$ is used for $\mathbb{R, C}$ (the real or complex number fields respectively) or a general algebraically closed field of characteristic zero. We denote by $\mathbb{K}_X$, the constant sheaf on a base space $X$ with stalks being isomorphic to $\mathbb{K}$. The sheaf of $\mathbb{K}$-valued continuous functions on $X$ is denoted by $C^0$.
\begin{Def}{\bf Algebraic space or $a$-space:}
	Let $X$ be a topological space and $\mathcal{O}_X$ be a $\mathbb{K}_X$-subalgebra of the sheaf of $\mathbb{K}$-algebras $C^0$.  In other words, let $\mathcal{O}_X$  be a sheaf of $\mathbb{K}$-algebras where for an open set U $\subset $ X we assign subalgebra $\mathcal{O}_X(U)$ of the algebra $C^0(U)$ and the assignment is compatible with restriction morphisms.
	
	 The pair $(X, \mathcal{O}_X)$ is said to be an algebraic space over the topological space $X$ or, simply $(X, \mathcal{O}_X)$ is an  $a$-space. 
	\end{Def}	 
	 In most of the cases, we consider an $a$-space $(X, \mathcal{O}_X)$ where $\mathcal{O}_X$ is the structure sheaf on $X$.
	 		
A morphism  $(\mathcal{\phi,\phi^*}): (X,\mathcal{O}_X) \Longrightarrow (Y,\mathcal{O}_Y)$ between two $a$- spaces is made up by a continuous map $\phi : X\longrightarrow Y $ and a family of ring homomorphisms 
		\begin{center}
			$\phi_V ^* :\mathcal{O}_Y(V) \longrightarrow \mathcal{O}_X(\phi^{-1}(V))$

		\hspace{.7 cm}	$ g \longmapsto g\circ {\phi}|_{\phi^{-1}(V)} $,
			
				\end{center}
		where $V$ is an open subset of $Y$ and $ g\in \mathcal{O}_Y (V)$. All
		maps commute with inherent restriction maps for the sheaves.
	\begin{Rem}
An appropriate choice for the structure sheaf $\mathcal{O}_X$ provides various examples of $a$-spaces $(X, \mathcal{O}_X)$ coming from smooth manifolds, complex manifolds, analytic spaces and  algebraic varieties. 
\end{Rem}
 \textbf{Tangent Sheaf :}  The tangent sheaf $\mathcal{T}_X$ on a smooth manifold $(X,C^\infty)$ is defined by $\mathcal{T}_X(U) :=Der_{\mathbb{R}}(C^\infty(U))$ for any open subset $U$ of $X$. Elements of  $\mathcal{T}_X(U)$ are linear operators on $C^\infty(U) $ satisfying Leibniz rule, which are equivalent to  smooth  vector fields on $U$. 
 
 Analogous consideration holds for Stein spaces and affine varieties \cite{CW}, in view of the Cartan-Serre theorem \cite{FF}. But, in a more general case for any analytic space or algebraic variety  $(X, \mathcal{O}_X)$, firstly we need to assign $ U \mapsto Der_{\mathbb{K}}(\mathcal{O}_X(U))$ for each Stein open set or affine open set $U$ and it induces a presheaf. By considering $X= \cup_i U_i$ with an open Stein or affine cover, we get a canonical surjective homomorphisms  $Der_{\mathbb{K}}(\mathcal{O}_X(U_i)) \rightarrow Der_{\mathbb{K}}(\mathcal{O}_{X,x})$ of $\mathbb{K}$-vector spaces for each $x \in U_i$.  For  an open set $U (= \cap_i U \cap U_i)$, we define restriction morphism from $U_i$ to $U\cap U_i$. A section $\tilde{D}$ on $U$ is given by a collection of sections $D_i$ associated with good open sets $U \cap U_i$ and using this we can define restriction for the above presheaf. Consequently, the sheaf $ \mathcal{D}er_{\mathbb{K}_X}(\mathcal{O}_X)$ is obtained by sheafification of this presheaf. 
 
 In view of this, the tangent sheaf over an $a$-space $(X, \mathcal{O}_X)$ is given by $ \mathcal{D}er_{\mathbb{K}_X}(\mathcal{O}_X)$- the sheaf of derivations over $ \mathcal{O}_X$. It has a canonical structures of a $\mathbb{K}_X$-Lie algebra and an $\mathcal{O}_X$-module  with the compatibility condition via Leibniz rule.

The \textbf{Cartan-Serre Theorem} states that for any coherent analytic (or algebraic) sheaf $\mathcal{F}$ on a Stein space (or affine variety) $X$ the stalk $\mathcal{F}_x$ of $\mathcal{F}$ at a point $x \in X$ is generated as an $\mathcal{O}_{X,x}$-module by global sections of the sheaf $\mathcal{F}$. Similar results holds for a coherent $\mathcal{O}_X$-module on a smooth manifold $X$.

Next, we consider Lie algebroids over algebraic spaces as a sheaf of Lie-Rinehart algebras equipped with a morphism  to the tangent sheaf on $a$-spaces. Let ${R}$ be a commutative  associative algebra over a field $\mathbb{K}$, then the set $Der_{\mathbb{K}}({R})$ of all $\mathbb{K}$-derivations of ${R}$, is a $\mathbb{K}$-Lie algebra as well as an ${R}$-module. 
            
      A $(\mathbb{K}, R)$-\textbf{Lie-Rinehart algebra} $L$ is a Lie algebra  $L$ (over $\mathbb{K}$) with an $R$-module structure and an $R$-module homomorphism $\rho:L\rightarrow Der_{\mathbb{K}}(R)$ such that 
\begin{itemize}
\item the map $\rho:L\rightarrow Der_{\mathbb{K}}(R)$ is simultaneously an $R$-module homomorphism and a $\mathbb{K}$-Lie algebra homomorphism and 
\item $ [x, ry]=r[x,y]+\rho(x)(r)y~~\mbox{ for all}~ x,y\in L,~r\in R.$
\end{itemize}
\begin{Def} \textbf{Lie algebroids over an $a$-space :}  A Lie algebroid $\mathcal{L}$ over an $a$-space $(X,\mathcal{O}_X)$ is a sheaf of $(\mathbb{K}_X, \mathcal{O}_X)$-Lie-Rinehart algebras.

		That is $\mathcal{L}$ is a sheaf of $\mathbb{K}$-Lie algebras and $\mathcal{O}_X$-modules equipped with a morphism of $\mathcal{O}_X$-modules  $\mathfrak{a}:(\mathcal{L},[\cdot,\cdot])\rightarrow (\mathcal{D}er_{\mathbb{K}_X}(\mathcal{O}_X),[\cdot,\cdot]_c)$, called the anchor map. The map $\mathfrak{a}$ is also a morphism of sheaves of $\mathbb{K}$-Lie algebras  and satisfying the Leibniz rule: 
	 $[D,f D'] = f[D,D']+\mathfrak{a}(D)(f) D'$ for every sections $f \in \mathcal{O}_X$ and $D,D' \in \mathcal{L}$ .
	
	We denote this Lie algebroid as $(\mathcal{L}, [\cdot, \cdot], \mathfrak{a})$ or simply by $\mathcal{L}$.
\end{Def}

\begin{Rem}
Lie algebroids over a smooth manifold $X$ can be viewed  by this general set up of Lie algebroids over the $a$-space $(X, C^\infty)$. 
\end{Rem}
A morphism of Lie algebroids over an $a$-space $$\phi :  (\mathcal{L}_1,[\cdot,\cdot]_1,\mathfrak{a}_1)\rightarrow (\mathcal{L}_2,[\cdot,\cdot]_2,\mathfrak{a}_2)$$ is a sheaf homomorphism of Lie-Rinehart algebras. 
	
	 The sheaves of $(\mathbb{K}_X, \mathcal{O}_X)$-Lie-Rinehart algebras  along with the  sheaf homomorphisms of $(\mathbb{K}_X,\mathcal{O}_X)$-Lie-Rinehart algebras form a category. We call it the category of Lie algebroids over the $a$-space $(X,\mathcal{O}_X)$.
\begin{Thm} \label{Vec bun & loc free}
Let $(X, \mathcal{O}_X)$ be an $a$-space. There is a one-to-one correspondence between vector bundles over $X$ (in the respective  category) and locally free sheaves of $\mathcal{O}_X$-modules of finite rank.
\end{Thm}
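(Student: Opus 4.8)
The plan is to construct two mutually inverse assignments---a \emph{section functor} from vector bundles to sheaves and a \emph{gluing functor} in the reverse direction---and to verify that they induce a bijection on isomorphism classes. First I would take a rank-$n$ vector bundle $\pi\colon E\to X$ (in the smooth, analytic or algebraic category matching the choice of $\mathcal{O}_X$) and assign to it the sheaf $\mathcal{E}$ defined on an open set $U\subset X$ by $\mathcal{E}(U):=\{s\colon U\to E \mid \pi\circ s=\mathrm{id}_U,\ s \text{ a morphism}\}$, with the evident restriction maps. Pointwise addition and multiplication by sections of $\mathcal{O}_X$ make $\mathcal{E}$ a sheaf of $\mathcal{O}_X$-modules. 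The key point is that local triviality, $E|_U\cong U\times\mathbb{K}^n$ over a trivializing open set $U$, translates directly into an isomorphism $\mathcal{E}|_U\cong \mathcal{O}_X^{\,n}|_U$, so that $\mathcal{E}$ is locally free of rank $n$.

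Conversely, given a locally free $\mathcal{O}_X$-module $\mathcal{F}$ of rank $n$, I would choose an open cover $\{U_i\}$ of $X$ together with trivializations $\varphi_i\colon \mathcal{F}|_{U_i}\xrightarrow{\ \sim\ }\mathcal{O}_X^{\,n}|_{U_i}$. On overlaps the composites $\varphi_i\circ\varphi_j^{-1}$ are $\mathcal{O}_X$-linear automorphisms of $\mathcal{O}_X^{\,n}|_{U_i\cap U_j}$, hence are given by matrices $g_{ij}\in GL_n\big(\mathcal{O}_X(U_i\cap U_j)\big)$ satisfying the cocycle identities $g_{ii}=\mathrm{id}$ and $g_{ij}g_{jk}=g_{ik}$ on triple overlaps. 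I would then form the total space $E:=\big(\bigsqcup_i U_i\times\mathbb{K}^n\big)/\!\sim$, where $(x,v)\in U_i\times\mathbb{K}^n$ is identified with $(x,g_{ij}(x)v)\in U_j\times\mathbb{K}^n$, and equip it with the induced projection to $X$; the cocycle condition guarantees that $\sim$ is an equivalence relation and that the charts patch to a bundle atlas.

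It then remains to check that the two constructions are mutually inverse up to isomorphism: the sheaf of sections of the glued bundle $E$ is canonically identified with $\mathcal{F}$ via the local frames, and reconstructing the bundle from $\mathcal{E}$ returns $E$ up to a canonical isomorphism, since both procedures are governed by the same $GL_n(\mathcal{O}_X)$-valued cocycle. One finally notes that isomorphic bundles yield isomorphic sheaves and conversely, so the correspondence descends to isomorphism classes. The main obstacle I anticipate is not the formal bookkeeping but ensuring that the glued total space is a legitimate object of the \emph{respective} category: since $\mathcal{O}_X$ is only a subsheaf of $C^0$, the matrix entries $g_{ij}$ are a priori merely continuous, and one must use that they lie in $\mathcal{O}_X(U_i\cap U_j)$ and are invertible there to certify that the transition maps, and hence $E$, are smooth, holomorphic or regular as appropriate. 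Verifying this regularity uniformly across the three cases---so that the construction genuinely lands in the right category---is the delicate step; the remaining verifications are routine once the cocycle formalism is in place.
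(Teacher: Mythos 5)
Your proposal is correct and follows the standard route: the section functor in one direction, reconstruction of the total space from a $GL_n(\mathcal{O}_X)$-valued \v{C}ech cocycle in the other, and you rightly flag the only delicate point, namely that the transition matrices have entries in $\mathcal{O}_X$ (not merely in $C^0$) so that the glued bundle is an object of the respective smooth, holomorphic or algebraic category. The paper itself supplies no proof of this theorem, deferring to \cite{BP,SR,CW}, where essentially your argument (including its upgrade to an equivalence of categories) is carried out.
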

In fact, the correspondence mentioned above is an equivalence of categories (see \cite{BP, SR, CW}).
   The classical cases of Lie algebroids can be viewed now as Lie algebroids over $a$-spaces which are locally free sheaves of $\mathcal{O}_X$-modules with finite rank.
Here, we take the $a$-spaces $(X, \mathcal{O}_X)$ as smooth manifold, complex manifold, analytic space, algebraic variety respectively, all are with their associated structure sheaf.

In case of $X$ is a  $C^k$-manifold with $k \in \mathbb{N}$, the tangent sheaf $\mathcal{D}er_{\mathbb{K}_X}(\mathcal{O}_X)$ is trivial. We do not consider this in our discussion. 
 \begin{Note} \label{Schemes} 
For a finitely generated (regular) commutative algebra $R$ over $\mathbb{K}$ we have the affine scheme $X=Spec(R)$ with its structure sheaf of regular functions $\mathcal{O}_X$ induced by localization of the ring $R$. On extending the localization ideas from rings to the modules, we find that $\mathcal{D}er_{\mathbb{K}_X}(\mathcal{O}_X)$ is the induced sheaf by localization of the $R$-module $Der_{\mathbb{K}}(R)$ with standard $(\mathbb{K},R)$-Lie-Rinehart algebra structure. It is a locally finitely presented (free) sheaf of $\mathcal{O}_X$-module.

Lie algebroids over schemes (Noetherian, separated or finite type) have been studied in the literature \cite{UB,CV,AP,MK}. But, approaches for  proofs of necessary theorems can be different for the more general set up. 
\end{Note}
It is well known that by considering the space of global sections, there is special Lie-Rinehart algebra with finitely generated projective module structure being associated with a Lie algebroid in the classical case (for base spaces are smooth manifolds, Stein manifolds and affine varieties) \cite{BRT}. On the other hand, we recall that for a $(\mathbb{K}, R)$-Lie-Rinehart algebra $L$ (finitely generated projective module), we can associate a Lie algebroid (locally free sheaf of finite rank) over the affine scheme $Spec(R)$, which is induced from the localization \cite{CW} of $L$ as an $R$-module, whose space of global sections become $L$. 

\begin{Exm}
For a real smooth manifold (complex manifold) as an $a$-space $(X, \mathcal{O}_X)$, the sheaf of smooth (holomorphic) vector fields is isomorphic to the sheaf of Lie-Rinehart algebras $(\mathcal{D}er_{\mathbb{K}_X}(\mathcal{O}_X), [\cdot,\cdot]_c, id)$. It is a locally free Lie algebroid of rank equals to dimension of the manifold  $X$.

The sheaf of differential $1$-forms $\Omega^1_X$ over a Poisson manifold $X$ has a canonical Lie algebroid structure. For smooth and analytic cases the details can be found in \cite{RF} and \cite{BP} respectively.
\end{Exm}

\begin{Exm} \label{Foliation}
	A singular foliation on a  manifold $(X, \mathcal{O}_X)$ is a sub-sheaf  $\mathcal{F}$ of the sheaf $\mathcal{T}_X$ of $\mathcal{O}_X$-modules, which is $(i)$ stable under the Lie bracket and $(ii)$ locally finitely generated. It provides a generalized involutive distribution on $X$, forms a Lie algebroid over $(X, \mathcal{O}_X)$ (see \cite{CL,RF, BP}).  
\end{Exm}
\begin{Exm}	
Let $\mathbb{K}$ be an algebraically closed field (characteristic zero) and $X= \mathbb{A}^n$ be the affine $n$-space. Consider an (irreduciable) affine algebraic set $Y = V(I)\subset \mathbb{A}^n$ with its co-ordinate ring of regular functions $\mathcal{O}(Y):= \mathbb{K}[x_1, \dots, x_n]/I $, where $I$ is an (prime) ideal in the polynomial algebra $\mathbb{K}[x_1,\dots,x_n]$. We have $\mathfrak{X}(Y) := Der_{\mathbb{K}}(\mathcal{O}(Y))$, the space of derivations over $\mathcal{O}(Y)$ as the analogue of space of algebraic vector fields, which is a $(\mathbb{K}, \mathcal{O}(Y))$-Lie-Rinehart algebra and $\mathfrak{X}^T(X) := \{D \in \mathfrak{X}(X) \mid D(I) \subset I\}$ as the analogue of space of algebraic vector fields that are tangent to $Y$, which is a $(\mathbb{K}, \mathcal{O}(X))$-Lie-Rinehart algebra (both of these are finitely presented module). We can extend this idea by considering it as local model via sheaf theoretic tools to get sheaf of logarithmic derivations over an algebraic variety (locally affine). 
\end{Exm}

\begin{Exm}
	Consider the affine schemes $\mathbb{A}^I: = Spec(\mathbb{K}[x_i]_{i \in I})$ and $\mathbb{A}^\infty := \mathbb{A}^\mathbb{N}$ (see \cite{VD}). The space of global sections of the tangent sheaf for the affine scheme $\mathbb{A}^I$ is the free $\mathbb{K}[x_i]_{i \in I}$-module spanned by the formal derivations $\{ \partial_{x_i}\}_{i \in I}$. This provides an example of Lie algebroids which are quasicoherent $\mathcal{O}_{\mathbb{A}^\mathbb{N}}$-module.
\end{Exm}
\subsection{Analytic spaces} \label{analytic spaces} 
Let $X = \mathbb{C}^n$ be the $n$-dimensional complex manifold and $\mathcal{O}_X$ be the sheaf of holomorphic functions. Let $\mathcal{I}$ be a coherent ideal-sheaf of $\mathcal{O}_X$. The vanishing set $V(\mathcal{I})$ of $\mathcal{I}$ is a subspace of $X$ which is not necessarily a submanifold (it may have singularity as well). Let us denote $Y = V(\mathcal{I})$ and we consider the sheaf of functions $\mathcal{O}_Y := \mathcal{O}_X/{\mathcal{I}}$ as its structure sheaf. Here, the pair $(Y, \mathcal{O}_Y)$ is an analytic space (see in \cite{BP}).
	
If $Y$ is not smooth (singular), it has no tangent bundle in general, then we consider derivations, $\mathcal{T}_Y := \mathcal{D}er_{\mathbb{C}_Y}(\mathcal{O}_Y)$ is the tangent sheaf for $(Y, \mathcal{O}_Y)$. Here $(\mathcal{T}_Y, [\cdot,\cdot]_c,id)$ is a coherent (locally finitely generated) sheaf of Lie-Rinehart algebras over $(Y,\mathcal{O}_Y)$.

We consider the generalized involutive distribution (or singular foliation) given by holomorphic vector fields of $X$ that are tangent to $Y$ \cite{BP} as 
$$\mathcal{T}_X(-log Y) := \{D \in \mathcal{T}_X : D(\mathcal{I}) \subset \mathcal{I}\} \hookrightarrow \mathcal{T}_X$$
with canonical Lie algebroid structure.  For any two sections $D,D' \in \mathcal{T}_X(-log Y)$ and a section $g \in \mathcal{I}$ we have the identity $[D,D']_c(g)=D(D'(g))-D'(D(g)) \in \mathcal{I}$. It is associated with $\mathcal{T}_Y$ via the Lie algebroid epimorphism 
$$\rho : \mathcal{T}_X(-log Y) \rightarrow \mathcal{T}_Y$$
 defined by $\rho (D) = \bar{D}$, $\bar{D}([{f}]) = [{D(f)}]$ for any sections $f \in \mathcal{O}_X, D \in\mathcal{T}_X(-log Y)$.
	
	\bigskip
 \textbf{Nilpotent cone:} \label{Nilpotent cone}
Here we take a family of level sets $\{f^{-1}(c) \mid c \in \mathbb{C}\}$ associated to  a system of first order homogeneous  partial differential equations 	$$D_x(f) = D_y(f) = D_z(f) = 0.$$ For $c = 0$ we get the surface with singularity, which is called a nilpotent cone. It forms an analytic space with a singularity at the origin $\bar{0}=(0,0,0)$. 

One may describe this nilpotent cone through a generalized involutive distribution generated by the Hamiltonian vector fields in a Poisson manifold structure on $X:=\mathbb{C}^3$.
The Poisson algebra $(\mathcal{O}_{\mathbb{C}^3}(\mathbb{C}^3), \{\cdot,\cdot \})$ defined on coordinate functions $x, y, z$ as follows. $$\{x,y\} = 2y, \{x,z\} = -2z, \{y,z\} = x.$$
Then the associated Hamiltonian vector fields  are given by $$D_x = 2y\partial_y - 2z \partial_z ,  D_y = -2y\partial_x + x \partial_z , D_z =  2z\partial_x - x \partial_y.$$
 We consider the system of first order homogeneous  partial differential equations
	$$D_x(f) = D_y(f) = D_z(f) = 0.$$ 
	
	Then we find solution of the above system is $f(x,y,z) = x^2 +4yz$. It describes a family of level sets $f^{-1}(c)$, is  parametrized by $c \in \mathbb{C}$. For each nonzero values of $c$ we get  a $2$-dimensional submanifold.  Now $Y= f^{-1}(0)$ is the vanishing set of an ideal sheaf $\mathcal{I} \subset \mathcal{O}_{X}$ generated by the single element $x^2 + 4yz$. Here, the Lie algebroid $\mathcal{T}_{X}(-log Y)$ is $\mathcal{O}_X$-linear span of the Hamiltonian vector fields tangent to $Y$, and $\mathcal{T}_Y$ is spanned by $D_x|_Y, D_y|_Y, D_z|_Y$ as $\mathcal{O}_Y$-module.  Moreover, the generators of $\mathcal{T}_{X}(-log Y)$ satisfy the equation $$x D_x + 2z D_y + 2y D_z = 0.$$ This shows that it is not a locally free $\mathcal{O}_X$-module. Here, $\mathcal{T}_{Y\setminus \{\bar{0}\}}$ is a locally free module of rank $2$ over $\mathcal{O}_{Y\setminus \{\bar{0}\}}$. However at the origin, this sheaf is no longer free and the stalk $\mathcal{T}_{Y,\bar{0}}$ is a module of rank $3$ over the algebra $\mathcal{O}_{Y,\bar{0}}$.

 It follows that the tangent sheaf $\mathcal{T}_Y$ is not a locally free sheaf of Lie-Rinehart algebras over the analytic space $(Y, \mathcal{O}_Y)$ but it is a coherent sheaf generated by the three elements.
 
\medskip	
\textbf{Normal crossing:} \label{Normal crossing}
	Let $X = \mathbb{C}^2$ with global coordinate functions $x,y$ and we consider the analytic space as $(Y, \mathcal{O}_Y)$ associated to the vanishing set of the ideal sheaf $\mathcal{I}$ generated by the function $xy$. Here the subspace $Y$ is the union of points on the axes, known as a normal crossing divisor. The Lie algebroid  $\mathcal{T}_X(-log Y)$ is $\mathcal{O}_X$-module generated by the derivations $x \partial_x$ and $y \partial_y$.

	  For a point $p$ on $x$-axis or $y$-axis other than the origin $\bar{0}:=(0,0)$, the stalk $\mathcal{T}_{Y,p}$ is generated by the germs between one of the vector fields $x \partial_x$ or $y \partial_y$ at $p$. It follows that the tangent sheaf $\mathcal{T}_Y$ is a locally free $\mathcal{O}_Y$-module of rank $1$ on $Y\setminus \{\bar{0}\}$. But the stalk $\mathcal{T}_{Y,\bar{0}}$ is a module of rank $2$ over $\mathcal{O}_{Y,\bar{0}}$. Therefore, $\mathcal{T}_Y$ is an example for Lie algebroid which is coherent sheaf but not locally free. 
	  Observe that $\mathcal{T}_X(-log Y)$ is a locally free sheaf of $\mathcal{O}_X$ module of rank $2$ on $X\setminus Y$. It is equal to $\mathcal{T}_X$ on $X\setminus Y$ and on $Y$ it is isomorphic to  $\mathcal{T}_X$ but equality not holds.   
	  
	  See Section \ref{not locally free} for details about non-locally free property of the Lie algebroids.
	  


\section{Universal enveloping algebroid of a Lie algebroid }
Here we describe universal enveloping algebroid of a Lie algebroid in terms of the universal enveloping algebra for a Lie-Rinehart algebra. It is obtained through sheafification and by focusing on a model object being the sheaf of differential operators defined on a smooth  manifold. Subsequently, we present a generalized version of Poincare-Birkhoff-Witt theorem (PBW theorem in short hand notation ) for Lie algebroids over some special $a$-spaces, which is analogous to its local version of Lie-Rinehart algebras in \cite{GR}. First we recall some basic concepts related to the sheafifications from \cite{SR}, which help us to follow notations used in later part of our presentation.
 
\textbf{Sheaf associated to a presheaf :} \label{sheafification} Let $\mathcal{F}$ be a presheaf of $\mathcal{O}_X$-modules. At each point $x$, the stalk $\mathcal{F}_x$ has induced algebraic structure from the presheaf.  On sheafification of the presheaf $\mathcal{F}$, we get a sheaf denoted as $\mathcal{F}^\#$ has this structure along with the universal property. Moreover a  section of the $\acute{e}$tale space associated with $S \in \mathcal{F}(U)$ is given as $\tilde{S} : U \rightarrow \displaystyle \cup_{x \in U} \mathcal{F}_x$ defined by $\tilde{S}(x) = [S_x] $ (the equivalence class of germ of $S$ at $x$).
 Now for any section $S' \in \mathcal{F}^\# (U)$ there exist an open cover $\{U_i: i\in I\}$ of $U$ where $S_i \in \mathcal{F}(U_i)$ for $ i \in I$ such that $S'|_{U_i} = \tilde{S_i}$.  Here $S_i = S_j$ on $U_i \cap U_j $ (non empty) and the sheaf $\mathcal{F}^\#$ is locally determined via the presheaf $\mathcal{F}$. Hence, the sheaf $\mathcal{F}^\#$ become an $\mathcal{O}_X$-modules and stalks of $\mathcal{F}$ and $\mathcal{F}^\#$ are same at each point.
 
The assignment $ \mathcal{F} \mapsto \mathcal{F}^\#$ induces a functor from the category of presheaves to the category of sheaves over some topological space. The adjunction of this functor is given by the natural inclusion functor.
Consider the notion $^p(\cdot)$ as $^p(\mathcal{F}^\#)=\mathcal{F}$ for a presheaf $\mathcal{F}$.

 \textbf{Operations on sheaves:} We associate new sheaves using algebraic operations locally followed by  sheafification.
 
 $(i)$ The tensor product $\mathcal{F}_1 \otimes_{\mathcal{O}_X} \mathcal{F}_2$ of any two given $\mathcal{O}_X$-modules $\mathcal{F}_1$ and $\mathcal{F}_2$ is the sheafification of the canonical presheaf $\{U \mapsto \mathcal{F}_1(U) \otimes_{\mathcal{O}_X(U)} \mathcal{F}_2(U)$ for every open set $U \in X$ with componentwise restrictions morphism$\}$. The associated presheaf is denoted as $^p(\mathcal{F}_1 \otimes_{\mathcal{O}_X} \mathcal{F}_2)$.

 $(ii)$ The quotient sheaf $\mathcal{F}_1/{\mathcal{F}_2}$ for a sheaf $\mathcal{F}_1$ with a subsheaf $\mathcal{F}_2$ is the sheafification of the cannonical preasheaf $\{ U \mapsto \mathcal{F}_1(U)/{\mathcal{F}_2(U)}$ for every open set $U$ in $X$ with natural restriction morphism$\}$. The associated presheaf is denoted as  $^p(\mathcal{F}_1/{\mathcal{F}_2})$.
 
 There are two other important sheaves associated with sheaves $\mathcal{F}_1$ and $\mathcal{F}_2$ as $\mathcal{O}_X$-modules. One is  the direct sum $\mathcal{F}_1 \oplus \mathcal{F}_2$ and  the other one is the sheaf homomorphisms $\mathscr{H}om _{\mathcal{O}_X}(\mathcal{F}_1, \mathcal{F}_2)$. These sheaves are obtained directly by the assignments $U \mapsto \mathcal{F}_1(U) \oplus \mathcal{F}_2(U)$ and $U \mapsto Hom _{\mathcal{O}_X(U)}(\mathcal{F}_1(U), \mathcal{F}_2(U))$ respectively. 
 
  
 For a Lie algebroid $\mathcal{L}$ over $(X,\mathcal{O}_X)$, we form the sheaf of tensor algebras $\mathcal{T}_{\mathcal{O}_X}\mathcal{L}$ as the sheafification of the cannonical presheaf: $U \mapsto T_{\mathcal{O}_X(U)} (\mathcal{L}(U))$ of tensor algebras for the $\mathcal{O}_X(U)$-modules $\mathcal{L}(U)$. To get the universal enveloping algebroid  of the Lie algebroid $\mathcal{L}$, we need to consider the quotient sheaf of this sheaf of tensor algebras (see \cite{TS,BP}).  A local version of this quotient sheaf is considered in Section (\ref{Weyl alg}). We follow here another approach to define the universal enveloping algebroid.
\subsection {The sheaf of universal enveloping algebras of  Lie-Rinehart algebras}\label{Uni env alg}
Let $(\mathcal{L},[\cdot,\cdot],\mathfrak{a})$ be  a Lie algebroid  over any of the special $a$-space $(X, \mathcal{O}_X)$.  For each open set $U$ of $X$, we find  the universal enveloping algebra $\mathcal{U}(\mathcal{O}_X(U),\mathcal{L}(U))$ of the $(\mathbb{K},\mathcal{O}_X(U))$-Lie-Rinehart algebra $\mathcal{L}(U)$. It satisfies the compatibility condition that for every open set $V\subset U$ there is a canonical restriction map \vspace{-0.05cm}
$$res_{UV}^\mathcal{U}:\mathcal{U}(\mathcal{O}_X(U),\mathcal{L}(U))\rightarrow \mathcal{U}(\mathcal{O}_X(V),\mathcal{L}(V)) $$ defined by $res_{UV}^\mathcal{U}(f$ $\bar{D}_1 \cdots \bar{D}_n)= f|_V$ $\overline{D_1|_V} \cdots \overline{D_n|_V} $ where $$f\in \mathcal{O}_X(U), D_i\in \mathcal{L}(U),\bar{D}_i = \iota_{\mathcal{L}(U)}(D_i),i=1, \dots ,n$$
and $\iota_{\mathcal{L}(U)}:\mathcal{L}(U) \rightarrow \mathcal{U}(\mathcal{O}_X(U),\mathcal{L}(U))$ is the canonical map.
Here, $\mathcal{U}(\mathcal{O}_X(U),\mathcal{L}(U))$ is an associative unital $\mathbb{K}$-algebra and bimodule over $\mathcal{O}_X(U)$ (\cite{MM}). 

  We find that  $\mathcal{U}(\mathcal{O}_X,\mathcal{L})$ is a  presheaf of associative unital $\mathbb{K}_X$-algebras and also a presheaf of left $\mathcal{O}_X$-modules with the assignment $ U \mapsto \mathcal{U}(\mathcal{O}_X(U),\mathcal{L}(U))$ and restriction morphisms are the maps $res_{UV}^{\mathcal{U}}$ for any two open sets $V, U$ with $V \subset U$.

  We consider its sheafification and the resulting sheaf is denoted by $\mathscr{U}(\mathcal{O}_X,\mathcal{L})$. All the necessary algebraic structures on $\mathscr{U}(\mathcal{O}_X,\mathcal{L})$ obtained via the structures induced on stalks of the presheaf $\mathcal{U}(\mathcal{O}_X,\mathcal{L})$. Thus, we have the canonical embedding $$\iota: \mathcal{U}(\mathcal{O}_X,\mathcal{L}) \hookrightarrow \mathscr{U}(\mathcal{O}_X,\mathcal{L})$$
   of presheaf of $\mathcal{O}_X$-linear $\mathbb{K}$-algebra monorphisms, defined by $\iota(U)(S) = \tilde S$ and $\tilde S(x) = S_x$ (germ of $S$ at $x$) for any section $S \in \mathcal{U}(\mathcal{O}_X(U),\mathcal{L}(U)), x \in U$.

Subsequently, we  form a sheaf monomorphism $\iota_X: \mathcal{O}_X \rightarrow \mathscr{U}(\mathcal{O}_X,\mathcal{L})$ of associative unital $\mathbb{K}_X$ algebras and $\iota_{\mathcal{L}}: (\mathcal{L}, [\cdot,\cdot]) \rightarrow (\mathscr{U}(\mathcal{O}_X,\mathcal{L}), [\cdot,\cdot]_c)$ a $\mathcal{O}_X$-linear sheaf homomorphisms of 
$\mathbb{K}_X$-Lie algebras.
\begin{Rem}\label{Uni prop}
The sheaf of universal enveloping algebras $\mathscr{U}(\mathcal{O}_X, \mathcal{L})$ is charecterized by the following \textbf{universal property}: 

Let $\mathcal{A}$ be a sheaf of unital associative $\mathbb{K}_X$-algebra with sheaf homomorphisms $\phi: \mathcal{O}_X \rightarrow \mathcal{A}$ of $\mathbb{K}_X$-unital algebras and $\psi: (\mathcal{L}, [\cdot,\cdot]) \rightarrow (\mathcal{A}, [\cdot,\cdot]_c)$ of  $\mathcal{O}_X$-linear $\mathbb{K}_X$-Lie algebras such that $\phi(f)\psi(D) = \psi(fD)$ and $[\psi(D), \phi(f)]_c = \phi(\mathfrak{a}(D(f)))$ holds for all sections $f$ of $\mathcal{O}_X$ and $D$ of $\mathcal{L}$. Then there exist a unique presheaf homomorphism of $\mathcal{O}_X$-linear $\mathbb{K}_X$-algebras $^p\widetilde{\psi}: \mathcal{U}(\mathcal{O}_X, \mathcal{L}) \rightarrow \mathcal{A}$.

By the universal property of the sheafification, there exists a unique $\mathcal{O}_X$-linear homomorphism of unital $\mathbb{K}_X$-algebras $\widetilde{\psi} : \mathscr{U}(\mathcal{O}_X, \mathcal{L}) \rightarrow \mathcal{A}$ such that $\widetilde{\psi} \circ \iota_X = \phi$ and $\widetilde{\psi} \circ \iota_{\mathcal{L}} = \psi$.
\end{Rem}
The sheaf  $\mathscr{U}(\mathcal{O}_X, \mathcal{L})$ is called {\bf universal enveloping algebroid of the Lie algebroid $\mathcal{L}$}.
\subsection{Generalized PBW theorem} 
For an open set $U$ of $X$, the algebra $\mathcal{U}(\mathcal{O}_X(U),\mathcal{L}(U))$ has a canonical filtration of $\mathcal{O}_X(U)$-modules \cite{GR,MM}: 
$$\mathcal{O}_X(U) = \mathcal{U}_{(0)}(\mathcal{O}_X(U),\mathcal{L}(U))\subset \mathcal{U}_{(1)}(\mathcal{O}_X(U),\mathcal{L}(U)) \subset \mathcal{U}_{(2)}(\mathcal{O}_X(U),\mathcal{L}(U)) \subset \cdots .$$ 
Here, $\mathcal{U}_{(n)}(\mathcal{O}_X(U),\mathcal{L}(U))$ is spanned by the elements in $\mathcal{O}_X(U)$ and the powers $i_{\mathcal{L}(U)}(\mathcal{L}(U))^m$ for $m = 1,2, \dots ,n$. Thus, we can form sub-presheaves of $\mathcal{O}_X$-module $\mathcal{U}_{(n)}(\mathcal{O}_X,\mathcal{L})$ given by
$\mathcal{U}_{(n)}(\mathcal{O}_X,\mathcal{L})(U) := \mathcal{U}_{(n)}(\mathcal{O}_X(U),\mathcal{L}(U))$. 

 The presheaf $\mathcal{U}(\mathcal{O}_X,\mathcal{L})$ of $\mathbb{K}$ algebras has also a natural filtration of presheaves of $\mathcal{O}_X$-modules: 
$$\mathcal{O}_X = \mathcal{U}_{(0)}(\mathcal{O}_X,\mathcal{L}) \subset \mathcal{U}_{(1)}(\mathcal{O}_X,\mathcal{L}) \subset \mathcal{U}_{(2)}(\mathcal{O}_X,\mathcal{L}) \subset \cdots .$$
On sheafification, these presheaves yield a filtration of $\mathscr{U}(\mathcal{O}_X,\mathcal{L})$. Let $\mathscr{U}_{(n)}(\mathcal{O}_X,\mathcal{L})$ be the associated sheaves of $\mathcal{O}_X$-modules for $n \geq 0$.

The  direct sum of the quotient sheaves associated with each of the consecutive sheaves appear in the above filtration of $\mathscr{U}(\mathcal{O}_X,\mathcal{L})$ forms a sheaf of graded algebras. It is denoted by $gr(\mathscr{U}(\mathcal{O}_X,\mathcal{L}))$. Therefore,
 	$$gr(\mathscr{U}(\mathcal{O}_X,\mathcal{L})) = \bigoplus_{n\geq 0} \mathscr{U}_{(n)}(\mathcal{O}_X,\mathcal{L})/\mathscr{U}_{(n-1)}(\mathcal{O}_X,\mathcal{L}).$$

Note that if a Lie algebroid $\mathcal{L}$  has zero bracket with zero anchor map then $\mathscr{U}(\mathcal{O}_X, \mathcal{L})$ become the sheaf of symmetric algebras $\mathcal{S}_{\mathcal{O}_X} \mathcal{L}$.
If for every $x \in X$, there is an open set $U_x$ in $X$ such that $\mathcal{L}(U_x)$ is a projective left $\mathcal{O}_X(U_x)$-module, then by applying PBW theorem for the $(\mathbb{K}, \mathcal{O}_X(U_x))$-Lie-Rinehart algebra $\mathcal{L}(U_x)$ we get the canonical isomorphisms $\mathcal{O}_X(U_x)$-algebras $$\theta_{U_x} : S_{\mathcal{O}_X(U_x)}(\mathcal{L}(U_x))\rightarrow gr (\mathcal{U}(\mathcal{O}_X(U_x),\mathcal{L}(U_x))).$$ This provides a presheaf of $\mathcal{O}_X$-algebra homomorphism (locally isomorphic)
$$\theta : S_{\mathcal{O}_X}(\mathcal{L})\rightarrow gr (\mathcal{U}(\mathcal{O}_X,\mathcal{L})).$$
\begin{Lem} \label{iso on sheaf level}
	Let $(X, \mathcal{O}_X)$ be an arbitrary $a$-space and $\mathcal{F}_1, \mathcal{F}_2$ be two presheaves of $\mathcal{O}_X$-modules with a presheaf homomorphism $\psi: \mathcal{F}_1\rightarrow \mathcal{F}_2$  inducing stalkwise isomorphism at each point of $X$. Then the induced morphism between corresponding sheaves gives a $\mathcal{O}_X$-module isomorphism
		$\psi^\#:\mathcal{F}^\#_1\rightarrow \mathcal{F}^\#_2$.
	\end{Lem}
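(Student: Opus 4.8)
The plan is to reduce the claim to the standard principle that a morphism of sheaves is an isomorphism precisely when it induces an isomorphism on every stalk, exploiting the fact---already recorded in the discussion of sheafification above---that passing from a presheaf to its associated sheaf leaves all stalks unchanged. First I would note that $\mathcal{F}\mapsto\mathcal{F}^\#$ is a functor, so the presheaf homomorphism $\psi$ determines a sheaf morphism $\psi^\#:\mathcal{F}_1^\#\rightarrow\mathcal{F}_2^\#$, and that under the canonical identifications $(\mathcal{F}_i^\#)_x\cong(\mathcal{F}_i)_x$ the stalk map $(\psi^\#)_x$ corresponds to $\psi_x$. Since $\psi_x$ is an isomorphism of $\mathcal{O}_{X,x}$-modules by hypothesis, each $(\psi^\#)_x$ is an isomorphism as well.

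For injectivity, suppose $s\in\mathcal{F}_1^\#(U)$ satisfies $\psi^\#(s)=0$. For every $x\in U$ the germ $s_x$ lies in the kernel of $(\psi^\#)_x$, hence $s_x=0$ by stalkwise injectivity; a section of a sheaf all of whose germs vanish is itself $0$, so $s=0$. Thus $\psi^\#$ is injective on sections over every open set.

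For surjectivity, fix $t\in\mathcal{F}_2^\#(U)$. For each $x\in U$ stalkwise surjectivity of $(\psi^\#)_x$ yields a germ in $(\mathcal{F}_1^\#)_x$ mapping to $t_x$; choosing a representative on a neighbourhood $V_x$ of $x$ and shrinking to $W_x\subseteq V_x$ on which the image section agrees with $t$, I obtain $s^{(x)}\in\mathcal{F}_1^\#(W_x)$ with $\psi^\#(s^{(x)})=t|_{W_x}$. On a double overlap $W_x\cap W_y$ both $s^{(x)}$ and $s^{(y)}$ are sent to $t$, so they coincide there by the injectivity just established. The gluing axiom for the sheaf $\mathcal{F}_1^\#$ then produces a unique $s\in\mathcal{F}_1^\#(U)$ restricting to each $s^{(x)}$, and $\psi^\#(s)=t$ by locality of the sheaf morphism. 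Hence $\psi^\#$ is bijective on every open set.

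The only genuinely substantive point is the naturality identification $(\psi^\#)_x=\psi_x$ under the stalk isomorphisms; once this is in place the remainder is purely formal and rests on the sheaf axioms, so I expect no real obstacle beyond bookkeeping. Finally, the set-theoretic inverse of $\psi^\#$ is automatically $\mathcal{O}_X$-linear, since the inverse of a bijective $\mathcal{O}_X(U)$-module homomorphism is again an $\mathcal{O}_X(U)$-module homomorphism for each open $U$; therefore $\psi^\#$ is an isomorphism of sheaves of $\mathcal{O}_X$-modules, as required.
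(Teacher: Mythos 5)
Your proposal is correct and follows essentially the same route as the paper's own proof: injectivity by passing to germs and using the stalkwise isomorphisms together with the identity axiom, and surjectivity by lifting germs locally through $\psi_x^{-1}$ and gluing in $\mathcal{F}_1^\#$. If anything, your version is slightly more careful, since you explicitly verify (via the injectivity already established) that the local preimages agree on overlaps before invoking the gluing axiom, a compatibility check the paper's proof leaves implicit.
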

\begin{proof} Let $s$ be a section of $\mathcal{F}^\#_1$, that is $s \in \mathcal{F}_1^\#(U)$ for some open set $U$ in $X$. There exists an open cover $\{U_i\}_{i \in I}$ of $U$ with sections $s_i \in \mathcal{F}_1(U_i)$ such that $s|_{U_i}=\tilde{s_i}$ for each $i \in I$ and $s(x)=(s_i)_x$ for each $x\in U_i$. Then $\psi^\#(U)(s)|_{U_i}= \widetilde{\psi_U(s_i)}$ where $\tilde{s_i}=\iota (U_i)(s_i)$  and $\iota : \mathcal{F} \hookrightarrow \mathcal{F}^\#$ given by $s \mapsto \tilde{s}$ defined as $\tilde{s}(x)=[s_x]$ for a section $s$ of the presheaf $\mathcal{F}$. Thus $\psi^\#:\mathcal{F}^\#_1\rightarrow \mathcal{F}^\#_2$ is a morphism. We need to check it is a  bijection.

	One-to-one: Let $U$ be any open subset of $X$ and $s, s'\in \mathcal{F}^\#_1(U)$ such that $\psi^\#(U)(s)- \psi^\#(U)(s')= 0$. Then stalkwise we get $\psi_x (s_x- s'_x)= 0$ for every $x\in U$. The stalkwise isomorphisms provides $s_x= s'_x$ for each $x\in U$. Thus, for each point $x$ there is an open neighborhood $U_x$ of $x$ in $U$ where $s= s'$. Using the local identity property of $\mathcal{F}^\#_1$ for this sections $s, s'$ on this open cover $\{U_x\}_{x\in U}$, we get $s= s'$.\\
	Onto: Let $U$ be any open subset of $X$ and $t\in \mathcal{F}^\#_2(U)$. Then considering germs $t_x$ of $t$ at each point $x\in U$ and by applying the inverses of the isomorphisms $\psi_x$, we get $s_i \in \mathcal{F}_1(U_x)$ for some open neighborhood $U_x$ of $x$ such that $\psi_y((s_i)_y)= t_y$ for all $y\in U_x$. Then by the gluing property of $\mathcal{F}^\#_1$ we can form an element $s\in \mathcal{F}^\#_1(U)$ such that $s|_{U_x}=\tilde{s_i}$, where $\tilde{s_i}(y)= (s_i)_y$ for all $y\in U_x$.
\end{proof}

\begin{Lem}\label{iso on stalk level}
	Let $(X, \mathcal{O}_X)$ be an arbitrary $a$-space and  $\mathcal{F}_1, \mathcal{F}_2$ be two presheaf of $\mathcal{O}_X$-modules with a presheaf homomorphism $\psi: \mathcal{F}_1\rightarrow \mathcal{F}_2$ 	such that $\psi$ is a locally isomorphism. Then $\psi$ induces stalkwise isomorphism.
\end{Lem}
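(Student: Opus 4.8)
The plan is to compute the stalk map $\psi_x : \mathcal{F}_{1,x} \to \mathcal{F}_{2,x}$ directly as a colimit of the component maps $\psi(V)$ and to exploit that, by hypothesis, these components are already isomorphisms once $V$ is taken small enough. Recall that $\psi$ being a local isomorphism means that each point $x$ admits an open neighborhood $U_x$ on which $\psi$ restricts to an isomorphism of presheaves, i.e.\ $\psi(V) : \mathcal{F}_1(V) \to \mathcal{F}_2(V)$ is bijective for every open $V$ with $x \in V \subseteq U_x$. Since $\mathcal{F}_{i,x} = \varinjlim_{V \ni x} \mathcal{F}_i(V)$ and the neighborhoods $V \subseteq U_x$ are cofinal in the directed system of all open neighborhoods of $x$, the induced map $\psi_x$ is the colimit of a family of isomorphisms indexed by a cofinal subsystem, hence itself an isomorphism. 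I would then unwind this into the elementary germ-chase below, so as to match the presentation of Lemma \ref{iso on sheaf level}.

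For injectivity, fix $x$ and a germ $[s]_x$ with $s \in \mathcal{F}_1(V)$ and $\psi_x([s]_x) = 0$. Then $\psi(V)(s)$ has vanishing germ at $x$, so there is an open $W$ with $x \in W \subseteq V \cap U_x$ and $\psi(V)(s)|_W = 0$; by naturality of $\psi$ this reads $\psi(W)(s|_W) = 0$. Because $W \subseteq U_x$, the component $\psi(W)$ is injective, forcing $s|_W = 0$ and hence $[s]_x = 0$. For surjectivity, take a germ $[t]_x$ with $t \in \mathcal{F}_2(V)$; restricting to $W := V \cap U_x$ and using that $\psi(W)$ is surjective, choose $s \in \mathcal{F}_1(W)$ with $\psi(W)(s) = t|_W$, so that $\psi_x([s]_x) = [t]_x$. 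That $\psi_x$ respects the $\mathcal{O}_{X,x}$-module structure is automatic, since each $\psi(V)$ is $\mathcal{O}_X(V)$-linear and the stalk structure is the colimit of these.

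The only genuine subtlety --- and the step I would be most careful about --- is the shrinking $W \subseteq V \cap U_x$ in the injectivity argument: the hypothesis provides isomorphisms only on opens contained in $U_x$, so one must pass to such a $W$ before invoking injectivity of a component map. This is exactly where cofinality of the small neighborhoods in the stalk colimit is used, and it is also the point that distinguishes a genuine local isomorphism (bijective on a neighborhood basis of each point) from the strictly weaker condition of being bijective on a single neighborhood, which would not by itself guarantee a stalkwise isomorphism.
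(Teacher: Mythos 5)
Your proof is correct and follows essentially the same route as the paper: a direct germ-chase in which one shrinks to an open set contained in the neighborhood $U_x$ on which $\psi$ is an isomorphism, then invokes injectivity and surjectivity of the component maps. The only cosmetic differences are that you phrase injectivity via triviality of the kernel (legitimate, since the maps are $\mathcal{O}_X$-linear) where the paper compares two distinct germs, and you add the cofinal-colimit framing, which repackages but does not change the argument.
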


\begin{proof} It is easy to see that on the level of  stalks $\psi$ induces morphisms and we need to check it is a bijection.
	
One-to-one: Let $s_1,s_2 \in (\mathcal{F}_1)_x$ such that $s_1 \neq s_2$ where $x\in X$. Then there exists $S_1,S_2 \in \mathcal{F}_1(V)$ for some open set $V \in X$ such that $(S_1)_x=s_1$ and $(S_2)_x=s_2$. Since $\psi$ is a locally isomorphism, thus there exist an open set $U_x$ around $x$ such that $\psi|_{U_x}: \mathcal{F}_1|_{U_x}\rightarrow \mathcal{F}_2|_{U_x}$ is an isomorphism. Without loss of generality $V$ can be thought as an open subset of $U_x$. Consider $S'_1:=\psi(V)(S_1)$ and $S'_2:=\psi(V)(S_2)$. Our claim is that $(S'_1)_x \neq (S'_2)_x$ that is we need to show that  $S'_1|_{V_x} \neq S'_2|_{V_x}$ for every open set $V_x$ around $x$. Suppose not, then there exist an open set $V_x$ such that $S'_1=S'_2$ on $V_x \subset V$. Consider the one-one property of the isomorphism $\psi(V_x):\mathcal{F}_1(V_x)\rightarrow \mathcal{F}_2(V_x)$. Then it contradicts the fact that $S_1 \neq S_2$ on $V_x$. Therefore, $\psi_x: (\mathcal{F}_1)_x \rightarrow (\mathcal{F}_2)_x$ is one-one for every $x \in X$.\\
Onto: Let $s' \in  (\mathcal{F}_2)_x$. Then there exist $S' \in \mathcal{F}_2(V)$ such that $(S')_x=s'$ for some $V \subset U_x$. By considering onto property  of the isomorphism $$\psi(V):\mathcal{F}_1(V)\rightarrow \mathcal{F}_2(V),$$ we get a pre-image of $S'$ say $S \in \mathcal{F}_1(V)$. We have $\psi_x(S_x)=s'$ by considering the germ $S_x$ for $x \in X$.
\end{proof}

Next, we proceed with $(X, \mathcal{O}_X)$ as one of the special $a$-spaces among smooth manifold, complex manifold, analytic space and algebraic variety.

\begin{Thm} \textbf{Generalized PBW theorem :} If $\mathcal{L}$ is a locally free Lie algebroid  over $(X, \mathcal{O}_X)$ then the graded algebra $gr(\mathscr{U}(\mathcal{O}_{X}, \mathcal{L}))$ is isomorphic to the symmetric algebra 	$\mathcal{S}_{\mathcal{O}_{X}}\mathcal{L}$ $(\mathcal{O}_X$-algebra isomorphism).	
	\end{Thm}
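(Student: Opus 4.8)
The plan is to deduce the sheaf-level statement from the local Poincar\'e-Birkhoff-Witt theorem for Lie-Rinehart algebras together with the two sheafification lemmas just proved. First I would exploit local freeness: for each $x \in X$ there is an open neighbourhood $U_x$ on which $\mathcal{L}$ restricts to a free $\mathcal{O}_X$-module, so that $\mathcal{L}(V)$ is a free---and in particular projective---$\mathcal{O}_X(V)$-module for every open $V \subseteq U_x$. This is exactly the local projectivity hypothesis under which the local PBW isomorphisms $\theta_U$ are available and assemble into the presheaf homomorphism of $\mathcal{O}_X$-algebras $\theta : S_{\mathcal{O}_X}(\mathcal{L}) \to gr(\mathcal{U}(\mathcal{O}_X,\mathcal{L}))$ described above; by construction $\theta$ is an isomorphism over each $U_x$, hence locally an isomorphism.

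Next I would chain the two lemmas. Applying Lemma \ref{iso on stalk level} to $\theta$ shows that $\theta$ induces an isomorphism on every stalk, and then Lemma \ref{iso on sheaf level} shows that the induced morphism of sheafifications $\theta^\# : (S_{\mathcal{O}_X}(\mathcal{L}))^\# \to (gr(\mathcal{U}(\mathcal{O}_X,\mathcal{L})))^\#$ is an isomorphism of sheaves of $\mathcal{O}_X$-modules. Because $\theta$ respects multiplication at the presheaf level and the algebra structure on each sheafification is the one induced on stalks, $\theta^\#$ is in fact an isomorphism of $\mathcal{O}_X$-algebras.

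It then remains to identify the two sheafified objects with those in the statement. By definition $\mathcal{S}_{\mathcal{O}_X}\mathcal{L}$ is the sheafification of the presheaf $U \mapsto S_{\mathcal{O}_X(U)}(\mathcal{L}(U))$, so $(S_{\mathcal{O}_X}(\mathcal{L}))^\# = \mathcal{S}_{\mathcal{O}_X}\mathcal{L}$. For the other side I must verify that sheafification commutes with passing to the associated graded, namely $(gr(\mathcal{U}(\mathcal{O}_X,\mathcal{L})))^\# = gr(\mathscr{U}(\mathcal{O}_X,\mathcal{L}))$. I would deduce this from exactness of the sheafification functor: applying it to the short exact sequences $0 \to \mathcal{U}_{(n-1)} \to \mathcal{U}_{(n)} \to \mathcal{U}_{(n)}/\mathcal{U}_{(n-1)} \to 0$ of presheaves of $\mathcal{O}_X$-modules, and using $\mathscr{U}_{(n)} = (\mathcal{U}_{(n)})^\#$, gives $\mathscr{U}_{(n)}/\mathscr{U}_{(n-1)} = (\mathcal{U}_{(n)}/\mathcal{U}_{(n-1)})^\#$ for every $n$; summing over $n$ yields the identification. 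Then $\theta^\#$ is the desired $\mathcal{O}_X$-algebra isomorphism $\mathcal{S}_{\mathcal{O}_X}\mathcal{L} \cong gr(\mathscr{U}(\mathcal{O}_X,\mathcal{L}))$.

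I expect the main obstacle to be precisely this last compatibility, since it is the only place where the sheaf-level and presheaf-level constructions could a priori diverge; everything else is a formal composition of the local PBW theorem with Lemmas \ref{iso on stalk level} and \ref{iso on sheaf level}. If one prefers to avoid invoking exactness abstractly, the same identification can be obtained stalkwise, using that the stalk of $gr(\mathscr{U}(\mathcal{O}_X,\mathcal{L}))$ at $x$ is the associated graded of the stalk $\mathscr{U}(\mathcal{O}_X,\mathcal{L})_x$ and that the stalk of $\mathcal{S}_{\mathcal{O}_X}\mathcal{L}$ is the symmetric algebra of the corresponding direct-limit module, so that the local PBW isomorphism passes to stalks directly.
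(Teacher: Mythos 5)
Your proposal is correct and follows essentially the same route as the paper: establish the local PBW isomorphisms over a trivializing cover, then pass through Lemma \ref{iso on stalk level} to stalkwise isomorphisms and Lemma \ref{iso on sheaf level} to the sheaf-level isomorphism. Your explicit verification that sheafification commutes with passing to the associated graded (via exactness, or stalkwise) is a compatibility the paper leaves implicit, but it does not change the argument.
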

\begin{proof} : If $\mathcal{L}$ is a locally free Lie algebroid over $(X, \mathcal{O}_X)$, then for every point $x\in X$ there exist an open neighborhood  $U_x$ of $x$ in $X$ such that
	\begin{center}
		$\mathcal{L}|_{U_x}\cong  \mathcal{O}_X^{\oplus^I}|_{U_x}$, where $I$ may be infinite.
	\end{center}
	Now, we have the PBW presheaf homomorphism of $\mathcal{O}_X$-algebras 
	\begin{center}
		$\theta:S_{\mathcal{O}_X}\mathcal{L} \rightarrow gr(\mathcal{U}(\mathcal{O}_{X}, \mathcal{L}))$\\
		$f \otimes D_1 \otimes \cdots \otimes D_k \mapsto  \frac{1}{k!} f \displaystyle\sum_{\sigma \in S_k} \bar{D}_{\sigma(1)} \cdots \bar{D}_{\sigma(k)}$
	\end{center}
	where $f$ is a section of $\mathcal{O}_X$ and $D_1, \dots , D_k$ are sections of $\mathcal{L}$.
By using local triviality property of $\mathcal{L}$, on each $U_x$ we have $\mathcal{O}_X|_{U_x}$-algebra isomorphism on restrictions of the presheaves
		$$\theta|_{U_x}:S_{\mathcal{O}_X}\mathcal{L}|_{U_x} \rightarrow gr(\mathcal{U}(\mathcal{O}_{X}, \mathcal{L}))|_{U_x}.$$
It induces a stalkwise isomorphisms of associated presheaves (follows from Lemma \ref{iso on stalk level}). Subsequently using the Lemma \ref{iso on sheaf level}, we get the required isomorphism.
\end{proof}

\begin{Rem} \label{embedding}
 By compairing $\mathcal{O}_X$-modules in degree $1$ of the PBW isomorphism we get $\mathcal{L}\cong \mathscr{U}_1(\mathcal{O}_X, \mathcal{L})/{\mathcal{O}_X}$. In turn, we have the short exact sequence (s.e.s) of $\mathcal{O}_X$-modules 
\begin{center}
	$0 \rightarrow \mathcal{O}_X \hookrightarrow  \mathscr{U}_{(1)}(\mathcal{O}_X, \mathcal{L}) \rightarrow \mathcal{L} \rightarrow 0$.
\end{center}
Infact it is also an s.e.s of Lie algebroids with respect to canonical Lie algebroid structures. 
It splits as $\mathcal{O}_X$-modules that is $\mathscr{U}_{(1)}(\mathcal{O}_X, \mathcal{L}) \cong \mathcal{O}_X \oplus \mathcal{L}$, as the Lie algebroid $\mathcal{L}$ is locally free $\mathcal{O}_X$-module. This induces an embedding $$\mathcal{L} \hookrightarrow \mathscr{U}_{(1)}(\mathcal{O}_X, \mathcal{L}) \subset \mathscr{U}(\mathcal{O}_X, \mathcal{L}).$$
\end{Rem}

\textbf{Notation :} From now on we consider the special type of open cover $\{U_x \mid x\in X \}$ of $X$ for a locally free Lie algebroid $\mathcal{L}$ and here the restrictions $\mathcal{L}|_{U_x}$ are free $\mathcal{O}_X|_{U_x}$-modules for every $U_x$.
\begin{Exm}
	 For $\mathcal{L}=\mathcal{T}_X$ the tangent sheaf on a smooth manifold or complex manifold $(X, \mathcal{O}_X)$, the universal enveloping algebroid  $\mathscr{U}(\mathcal{O}_X, \mathcal{T}_X)$ is the usual sheaf of regular differential operators \cite{MK, BP, SR}. 
	
	\label{Uni env alg exam for LR} Let $X$ be an $n$-dimensional $C^\infty$-manifold or a Stein manifold. Then $L:= Der_{\mathbb{K}}(\mathcal{O}_X(X))$ is the $(\mathbb{K}, \mathcal{O}_X(X))$-Lie-Rinehart algebra of first order homogeneous differential operators on $X$ and its universal enveloping algebra $\mathcal{U}(\mathcal{O}_X(X), L)$ is isomorphic to the $\mathbb{K}$-algebra (and $\mathcal{O}_X(X)$-module) of differential operators on $X$. The associated sheaf is denoted as $\mathcal{D}iff(\mathcal{O}_X)$ or $\mathcal{D}_X$ \cite{MK,SR}. In a local coordinate system $(U, (x_1, \dots ,x_n))$, elements of $\mathcal{U}_{(k)}(\mathcal{O}_X(U), Der_{\mathbb{K}}(\mathcal{O}_X(U)))$ are of the form
	$$\displaystyle \sum_{i_1 + \dots +i_n \leq k}f_{i_1 \dots i_n}(\partial/\partial x_1)^{i_1} \circ \cdots \circ (\partial/\partial x_n)^{i_n}.$$
		This approach locally holds for a complex manifold as it has an open Stein cover \cite{BRT}.

For a Lie algebroid $\mathcal{L}$ over a manifold $(X, \mathcal{O}_X)$, the anchor map $\mathfrak{a}: \mathcal{L} \rightarrow \mathcal{T}_X$ induce a cannonical map  $\tilde{\mathfrak{a}}: \mathscr{U}(\mathcal{O}_X, \mathcal{L}) \rightarrow \mathcal{D}_X$.

\end{Exm}

\section{Logarithmic differential operators for principal divisors }
In this section, we recall logarithmic derivations associated with principal divisors in \cite{RK,BP,LM} and discuss  associated universal enveloping algebroids.

We use the notation $ \langle \{D_1, \dots , D_n\} \rangle$ for $R$-module generated by the elements $D_1, \dots , D_n  \in E$, where $R$ is a $\mathbb{K}$-algebra and $E$ is an $R$-module.
\medskip

{\bf Principal divisors:}
Let $(X, \mathcal{O}_X)$ be a complex manifold. A principal divisor consists of a line bundle $N \overset{\pi}{\rightarrow} X$ equipped with a section $s$ whose zero set $Y:=s^{-1}(0)$ is nowhere dense in $X$. It corresponds to a principal ideal sheaf $\mathcal{I}$ which is locally free. If $s|_{U_{\alpha}}= f_{\alpha}$ on some Stein open cover $\{U_{\alpha}\}_{{\alpha} \in I}$ of $X$ then $\mathcal{I}|_{U_{\alpha}}=\langle f_{\alpha} \rangle$ for $ {\alpha} \in I$. In this case, $\mathcal{I}$ is a locally free $\mathcal{O}_X$-module of rank $1$ and so $Y=V(\mathcal{I})$ is a codimension $1$ submanifold of $X$. 
We denote by $N_Y$ the normal bundle of $Y$ in $X$.  Then $TX|_Y \cong TY \oplus N_Y$.
The sheaf of sections $\mathcal{N}_Y$ of the normal bundle of $Y$ is locally given by $ \langle \{ \nabla (f_{\alpha}|_Y) \} \rangle$ on $V_{\alpha}= Y\cap U_{\alpha}$ with $\alpha \in I$.

More generally, the subspace $Y:=V(\mathcal{I}) \subset X$ associated with a principal ideal sheaf $\mathcal{I} \subset \mathcal{O}_X$, considered as a principal divisor. Similar consideration hold in complex algebraic geometry context.

\textbf{Logarithmic derivations :} 
The sheaf $\mathcal{T}_X(- log Y )$ of derivations (vector fields) tangent to an analytic subspace (algebraic variety) $Y= V(\mathcal{I})$ or, the sheaf of derivations of $X$ which preserves the ideal sheaf $\mathcal{I}$ is called the sheaf of logarithmic derivations (Section \ref{analytic spaces}). It has a canonical Lie algebroid structure over $(X, \mathcal{O}_X)$.

\begin{Exm}
Consider the ideal sheaf $\mathcal{I}= \langle xy \rangle$ and $Y= V(\mathcal{I})$. Here 
$(Y, \mathcal{O}_Y)$ is an analytic space (affine algebraic set) with a singularity at the origin. It follows that  $\mathcal{T}_X( - log Y )= \langle \{x\partial_x, y\partial_y\} \rangle $ and the corresponding sheaf of meromorphic $1$-forms with poles along the divisor $Y$ is $\Omega_X^1( log Y)= \langle \{dx/x, dy/y\} \rangle.$
\end{Exm}
\subsection{Algebra of logarithmic differential operators :} \label{Weyl alg}
Next we describe universal enveloping algebroid associated with a Lie algebroid of logarithmic derivations. 

Algebra of differential operators $Diff_{\mathbb{K}}(R)$ for a commutative $\mathbb{K}$-algebra $R$ appeared in \cite{TS} is also denoted  as  $\mathcal{D}_{R/{\mathbb{K}}}$ (e.g. \cite{LM}). Here we compute its counter part associated to special divisors,  previously described (Section \ref{analytic spaces}). For this we use following identity holds in $Diff_{\mathbb{K}}(R)$:
$[D,f]_c= D \circ f - f \circ D= D(f)$ for any $D \in Der_{\mathbb{K}}(R)$ and $f \in R$. 

Recall that under additional algebraic condition on $R$, the canonical $\mathbb{K}$-algebra epimorphism  $$Diff_{\mathbb{K}}(R) \rightarrow \mathcal{U}(R, Der_{\mathbb{K}}(R))$$
  induces an isomorphism. For instance,  the algebra $R$ is $C^\infty(M)$ or $\mathbb{K}[x_1, \dots , x_n]$ or any arbitrary regular $\mathbb{K}$-algebra, where $Der_{\mathbb{K}}(R)$ is a projective $R$-module (see \cite{HM} and \cite{TS}).
\medskip

 {\bf Normal crossing divisors :} 
 Here, we recall the normal crossing (Section \ref{analytic spaces}) as an affine algebraic set. We consider $X=\mathbb{C}^2$ as an algebraic variety, $\tilde{R}:=\mathcal{O}_X(X)= \mathbb{C}[x,y]$. Suppose  the ideal $I=\langle xy \rangle \subset\mathcal{O}_X(X)$. We take the affine algebraic set $Y:=V(I)$ with its algebra of regular functions $R:=\mathcal{O}_Y(Y)=\mathbb{C}[x,y]/{\langle xy \rangle}$. 

The $(\mathbb{C},R)$-Lie-Rinehart algebra $L:=\mathcal{T}_X(-log Y)(X)=\langle \{x\partial_x, y\partial_y\} \rangle $. It is the space of logarithmic derivations  associated to the divisor $Y$, denoted sometime by $Der_{\mathbb{C}}(-log I))$. 
Let $\mathbb{C} \langle x_1,x_2,D_1,D_2 \rangle$ denote the $\mathbb{C}$-algebra of noncommutative polynomials on variables $x_1,x_2,D_1,D_2$.
Here, we have a canonical isomorphism
  $$\mathcal{U}(\tilde{R}, L)\cong \frac{\mathbb{C}\langle x_1,x_2,D_1,D_2 \rangle}{\langle \{[x_i,x_j]_c, [D_i,x_j]_c- \delta_{ij}x_j,[D_i, D_j]_c; i,j\in\{1,2\}\} \rangle}. $$
The normal crossing divisor is a free divisor. Thus, $\mathcal{D}_{\tilde{R}}(-log I)$-the
algebra of  logarithmic differential operators on $X$ associated to the divisor $Y$  is isomorphic to $\mathcal{U}(\tilde{R},L)$ (\cite{LM}).
 By localizations of the $\tilde{R}$-module $L$, we can form the $\mathcal{O}_X$-module of logarithmic derivations $\mathcal{L}$ 
and  deduce the sheaf of logarithmic differential operators $\mathscr{U}(\mathcal{O}_X,\mathcal{L})$ for the free divisor $Y$ corresponding to the algebra $\mathcal{U}(\tilde{R},L)$.

This approach helps us to determine the Weyl algebra $ \mathscr{U}(\mathcal{O}_Y,\mathcal{T}_Y)$ for an analytic space or an algebraic variety $(Y,\mathcal{O}_Y)$. But for affine case, i.e., for basic analytic space or affine variety $Y$ we need only to consider the space of global sections as Weyl algebra, which is $\mathcal{U}(R,\mathcal{T}_Y(Y))$.
\medskip

{\bf Nilpotent cone : } We recall the example from 
Section \ref{analytic spaces}, the nilpotent cone $Y=V(I)$ in $X = \mathbb{C}^3$ where $I= \langle x^2+4yz \rangle \subset \mathcal{O}_X(X)=\tilde{R}$ is a principal ideal. The  Hamiltonian vector fields $D_x, D_y, D_z$ describe a coherent involutive subsheaf $\mathcal{L}:=\mathcal{T}_X(-log Y)$ of the tangent sheaf $\mathcal{T}_X$
 but not locally free. 

 The space of global sections  $L:=\mathcal{L}(X)=Der_{\mathbb{C}}(-log I)$ is a $(\mathbb{C}, \tilde{R})$-Lie-Rinehart algebra.  Here $L$ is a finitely presented $\tilde{R}$-module generated by $\{D_x,D_y,D_z\}$. 
 
 In view of $Y$ as an affine variety inside the affine space $X$, we get $$\tilde{R}=\mathbb{C}[x,y,z], ~~R=\mathbb{C}[x,y,z]/ \langle x^2+4yz \rangle.$$  Consequently, we have a canonical isomorphism (denote it by $\phi$)
 $$\mathcal{U}(\tilde{R}, L)\cong \frac{\mathbb{C}\langle x_1,x_2,x_3,D_1,D_2,D_3 \rangle}{\langle 	\{[x_i,x_j]_c, [D_i,x_j]_c-\{x_i,x_j\},[D_i,D_j]_c-D_{\{x_i,x_j\}}; i,j\in\{1,2,3\}\} \rangle}, $$
 where $\{x_i,x_j\}:= \phi^{-1}(\{\phi(x_i), \phi(x_j)\})$ and $D_{\{x_i,x_j\}}:=\phi^{-1}(D_{\{\phi(x_i),\phi(x_j)\}})$,~for $i,j\in\{1,2,3\}$.

The $(\mathbb{C},\tilde{R})$-Lie-Rinehart algebra $ Der_{\mathbb{C}}(-log I)$ induces a $(\mathbb{C},R)$-Lie-Rinehart algebra structure on the $R$-module generated by the set $\{D_x|_Y, D_y|_Y, D_z|_Y\}$. It is equal to $\mathcal{T}_Y(Y)$. Then the Weyl algebra for $Y$ is 
$$\mathcal{U}(R,Der_{\mathbb{C}}(R)) \cong \mathcal{U}(\tilde{R}, Der_{\mathbb{C}}(-log I))/{Ker(\psi)}$$ where the map $$\psi:\mathcal{U}(\tilde{R}, Der_{\mathbb{C}}(-log I)) \rightarrow \mathcal{U}(R,Der_{\mathbb{C}}(R))$$
 is defined on the generators by the canonical restrictions. Also, we get an analogous  result associated with logarithmic differential operators:
 $$\mathcal{D}_{R/{\mathbb{C}}}\cong\frac{\mathcal{D}_{\tilde{R}}(-log I)}{Ker(\psi)}.$$ 
\subsection{PBW isomorphism for some non-locally free Lie algebroids :}  
In this part, we show that generalized PBW isomorphism holds  for Lie algebroid over $(X,\mathcal{O}_X)$ without the condition of locally free \cite{TS,BP}. 
Here, we consider Lie algebroids for special divisors.
\begin{itemize}
\item For a $(\mathbb{K},R)$-Lie-Rinehart algebra $L$, we have the PBW map (also called the symmetrization map)
\begin{center}
	$\phi: S_RL\rightarrow \mathcal{U}(R,L)$ given by $D_1\otimes \cdots \otimes D_n \mapsto \frac{1}{k!}\displaystyle\sum_{\sigma\in S_k} \bar{D}_{\sigma(1)}\cdots \bar{D}_{\sigma(k)}$
\end{center}
where $D_1, \dots ,D_k \in L$, is a surjective $R$-linear $\mathbb{K}$-algebra homomorphism. The morphism $\phi$ induces a surjective $R$-algebra homomorphism $$\tilde{\phi}:S_RL\rightarrow gr(\mathcal{U}(R,L)).$$

\item The map $\tilde{\phi}$ is an isomorphism when $L$ is, in addition, a projective $R$-module. 

Both of these results are given by G. Rinehart in\cite{GR}.

\item For a locally free Lie algebroid $\mathcal{L}$ over $(X,\mathcal{O}_X)$, at each point $x\in X$ the stalk $\mathcal{L}_x$ is free $\mathcal{O}_{X,x}$-module of fixed finite rank. It provides the $\mathcal{O}_X$-algebra isomorphism $\theta:\mathcal{S}_{\mathcal{O}_X}\mathcal{L}\rightarrow gr(\mathscr{U}(\mathcal{O}_X,\mathcal{L}))$.

\item Suppose $\mathcal{L}$ is a non-locally free Lie algebroid over $(X,\mathcal{O}_X)$, whose stalks $\mathcal{L}_x$ are free $\mathcal{O}_{X,x}$-modules of finite ranks and $x \mapsto rank(\mathcal{L}_x)$ is not a constant function. Hence, $\mathcal{L}_x$ is a $(\mathbb{K},\mathcal{O}_{X,x})$-Lie-Rinehart algebra and a free $\mathcal{O}_{X,x}$-module for each $x\in X$. It satisfies Rinehart's PBW criteria on the level of stalks. Therefore, the induced map of $\theta$ on each stalk provides an isomorphism of $\mathcal{O}_{X,x}$-algebras $\theta_x:\mathcal{S}_{\mathcal{O}_{X,x}}\mathcal{L}_x\rightarrow gr(\mathcal{U}(\mathcal{O}_{X,x},\mathcal{L}_x))$. As the map $\theta$ is defined via sheafification of the morphism induces from $\theta_x$, the PBW map is an isomorphism of $\mathcal{O}_X$-algebras.
\end{itemize}
 Let $\mathcal{L}_x$ be a $(\mathbb{K},\mathcal{O}_{X,x})$-Lie-Rinehart algebra which is also free $\mathcal{O}_{X,x}$-module for every $x\in X$. In view of  Rinehart's PBW theorem we have $\mathcal{O}_{X,x}$-algebra isomorphism by the symmetrization map $$\psi_x:\mathcal{S}_{\mathcal{O}_{X,x}}\mathcal{L}_x\rightarrow gr(\mathcal{U}(\mathcal{O}_{X,x},\mathcal{L}_x)) $$ for every $x\in X$. 
 
 The generalized PBW morphism $\theta:\mathcal{S}_{\mathcal{O}_X}\mathcal{L}\rightarrow gr(\mathscr{U}(\mathcal{O}_X,\mathcal{L}))$ is surjective and induces stalkwise isomorphisms of $\mathcal{O}_{X,x}$-algebras 
$$\theta_x:\mathcal{S}_{\mathcal{O}_{X,x}}\mathcal{L}_x\rightarrow gr(\mathcal{U}(\mathcal{O}_{X,x},\mathcal{L}_x))$$  for each $x\in X$.
 We have $\psi_x=\theta_x$  for all $x\in X$.
 \medskip
 
\subsubsection{On PBW isomorphism for some  examples } \label{not locally free}
 We show that PBW isomorphism holds for the Lie algebroid 
 of logarithmic derivations $\mathcal{L}:=$$\mathcal{T}_X(-logY)$ and tangent sheaf $\mathcal{T}_Y$ associated to $Y$ the nilpotent cone and the normal crossing divisor respectively (defined in Section \ref{analytic spaces}).  

{\bf The PBW isomorphism for nilpotent cone $Y$:}
\begin{itemize}
\item The equation $xD_x+2zD_y+2yD_z=0$ is satisfied by the logarithmic derivations $D_x,D_y,D_z$ associated to $Y$. Therefore, $\mathcal{L}(U_p)=\mathcal{T}_X(-logY)(U_p)$ is a free $\mathcal{O}_X(U_p)$-module of rank $2$ on open neighborhood $U_p$ of $p$ in $X$ with $p\in X \setminus \{\bar{0}\}$. For $p=\bar{0}$ we get that $(\mathcal{L})_{\bar{0}}$ is a free $\mathcal{O}_{X,\bar{0}}$-module of rank $3$. Consequently, $\mathcal{L}$ is a Lie algebroid with stalks $\mathcal{L}_x$ is of rank $2$ or $3$ as a free $\mathcal{O}_{X,x}$-module when $x\neq \bar{0}$ or $x=\bar{0}$ respectively. In this case, we get PBW isomorphism for the Lie algebroid $\mathcal{L}$.

\item Next, we consider the tangent sheaf $\mathcal{T}_Y$ of the analytic space  $(Y,\mathcal{O}_Y)$,
i.e. $\mathcal{D}er_{\mathbb{C}_Y}(\mathcal{O}_Y)=\langle \{[D_x|_Y], [D_y|_Y], [D_z|_Y]\} \rangle $. For $p\in Y \setminus \{\bar{0}\}$ and an open set $U_p$ containing $p$, we consider an open set $V_p:=Y\cap U_p$ in $Y$ containing $p$. Here the restriction, $\mathcal{T}_Y|_{V_p}$ is a free $\mathcal{O}_Y|_{V_p}$-module of rank $2$. Hence, $\mathcal{T}_{Y,p}$ is a free $\mathcal{O}_{Y,p}$-module of rank $2$.
But,  for $p=\bar{0}$ with an open neighborhood $V$ of $p$ in $Y$, the space of derivations  $\mathcal{T}_Y(V)$ cannot be generated by two generators. In fact, $\mathcal{T}_{Y,\bar{0}}$ is a free $\mathcal{O}_{Y,\bar{0}}$-module of rank $3$. We get the PBW isomorphism for the  Lie algebroid of the tangent sheaf $\mathcal{T}_Y$.
 \end{itemize}
 {\bf The PBW isomorphism for normal crossing divisor $Y$:}
 \begin{itemize} 
\item
Let $p\in X \setminus Y$ and $U_p$ be an open set containing $p$ in $X \setminus Y$. Now $\mathcal{T}_X(-logY)|_{U_p}$ is given by the restriction $\mathcal{T}_X|_{U_p}$ as $\mathcal{O}_X|_{U_p}$-modules.
For $q\in \{x-axis \}\setminus \{\bar{0}\}$, we have $U_q:=X\setminus \{x= 0\}$ is an open subset containing $q$. Here  $\partial_y \notin \mathcal{T}_X(-log Y)(U_q)$ on $U_q$ as $\frac{1}{y}$ is not defined on $U_q$. 
We find that $\mathcal{T}_X(-log Y)|_{U_q}$ is a free $\mathcal{O}_X|_{U_q}$-module of rank $2$. 
 Similarly, for a point $q \in \{y-axis \} \setminus \{\bar{0}\}$, there exists an open set $U_q$ containing $q$ with the analogous property. Also, stalk at $\bar{0}$ for $\mathcal{T}_X(-log Y)$ is of rank $2$ and spanned by the stalks of the elements in the generating set. Thus,  $\mathcal{T}_X(-log Y)$ is a locally free $\mathcal{O}_X$-module of rank $2$. 
 It is important to note here that for the singular space $Y$, associated Lie algebroid $\mathcal{L}$ is locally free, is called a free divisor. The generalized PBW map for $\mathcal{L}$ provides an isomorphism.

\item Next, we consider the Lie algebroid $\mathcal{T}_Y$.  For an open set containing $q$ say $V_q:= Y\cap U_q$, we get $\mathcal{T}_Y|_{V_q}=\langle \{(x\partial_x)|_Y\} \rangle$. Thus, $\mathcal{T}_Y|_{V_q}$ is a free $\mathcal{O}_Y|_{V_q}$-module of rank $1$. Similarly $\mathcal{T}_Y|_{V_p}$ is a free $\mathcal{O}_Y|_{V_p}$-module of rank $1$. But, the stalk $\mathcal{T}_{Y,\bar{0}}=\langle \{(x\partial_x)|_Y,(y\partial_y)|_Y\} \rangle$ is a $\mathcal{O}_{Y,\bar{0}}$-module of rank $2$. Therefore we cannot apply generalized PBW theorem for $\mathcal{T}_Y$, since it is not locally free $\mathcal{O}_{Y}$-module but its stalks at any point $q$ is free $\mathcal{O}_{Y,q}$-module with rank either $1$ or $2$. In this case also we get the PBW isomorphism.

\end{itemize}
\section{$\mathcal{O}_X/{\mathbb{K}_X}$-bialgebras}
The notation $\mathcal{O}_X/{\mathbb{K}_X}$-bialgebra is considered in this section as a sheaf of generalized bialgebra structure starting with a sheaf of unital associative $\mathbb{K}_X$-algebra which extends the sheaf $\mathcal{O}_X$. 
As an example, we consider universal enveloping algebroid $\mathscr{U}(\mathcal{O}_X,\mathcal{L})$ for a Lie algebroid $\mathcal{L}$ on $(X, \mathcal{O}_X)$.
On an open set we get back  the $R/{\mathbb{K}}$-bialgebra structure defined on the universal enveloping algebra. For this $R/{\mathbb{K}}$-bialgebra one can recover the Lie-Rinehart algebra by considering the space of primitive elements  (see \cite{MM}). 
In complex and algebraic geometry set up, this  bialgebra structure appeared in \cite{AP, CV, MK}, but we do not find construction involving the space of primitive elements through which we can recover the Lie algebroid. 

\subsection{ $\mathcal{O}_X/{\mathbb{K}_X}$-bialgebras :} Let $(X,\mathcal{O}_X)$ be an $a$-space. Suppose $\mathcal{A}$ is a sheaf of unital associative $\mathbb{K}_X$-algebra extending the sheaf $\mathcal{O}_X$ i.e., $\mathcal{O}_X$ is a subsheaf of algebras of $\mathcal{A}$. We may view $\mathcal{A}$ as a sheaf of $\mathcal{O}_X-\mathcal{O}_X$-bimodules. Let us recall the notation that  $^p(\mathcal{A}\otimes_{\mathcal{O}_X} \mathcal{A})$ is a presheaf of left $\mathcal{O}_X$-modules. 
 We consider the presheaf homomorphism 
$$\psi :~ ^p(\mathcal{A}\otimes_{\mathcal{O}_X} \mathcal{A}) \rightarrow \mathcal{H}om_{\mathbb{K}_X}(\mathcal{O}_X,~ ^p(\mathcal{A}\otimes_{\mathcal{O}_X} \mathcal{A})) $$ defined by 
$\psi(U)(a \otimes b)(f) = a \otimes (bf - fb)$ where $f \in \mathcal{O}_X(U)$ and  $a,b \in \mathcal{A}(U)$, for an open subset  $U$ of $X$. 

The sheafification of this presheaf morphism $\psi$ yields the sheaf of $\mathbb{K}_X$-algebras expressed as $$\mathcal{A}\bar{\otimes}_{\mathcal{O}_X} \mathcal{A}:= \mathscr{K}er  (\psi^\#) = ( \mathcal{K}er ~  \psi)^\# =  (^p(\mathcal{A}\bar{\otimes}_{\mathcal{O}_X} \mathcal{A}))^\# .$$


\begin{Def} 
A tuple $(\mathcal{A}, \Delta,\epsilon)$  is called $\mathcal{O}_X/{\mathbb{K}_X}$-bialgebra if the following conditions hold.

\begin{itemize}
\item $\mathcal{A}$ is a sheaf of unital associative $\mathbb{K}_X$-algebra extending the sheaf $\mathcal{O}_X$;
\item  $\mathcal{A}$ is equipped with the morphism of sheaf of $\mathcal{O}_X$-modules the comultiplication 
 $\Delta : \mathcal{A}\rightarrow \mathcal{A}\otimes_{\mathcal{O}_X} \mathcal{A}$ and counit $\epsilon : \mathcal{A}\rightarrow \mathcal{O}_X$;

\item $\Delta(\mathcal{A})$ is a subsheaf of the sheaf of $\mathcal{O}_X$-modules $\mathcal{A}\bar{\otimes}_{\mathcal{O}_X} \mathcal{A}$.
\item  $\Delta(1) = 1\otimes 1$ and $\epsilon(1) = 1$, where $1$ is the unit of $ \mathcal{O}_X$;
\item $\Delta(ab) = \Delta(a) \Delta(b)$,
 $\epsilon(ab) = \epsilon(a\epsilon(b))$ for any two sections $a,b$ in
 $\mathcal{A}$. 

\end{itemize}
\end{Def}
\begin{Def} 
Let $(\mathcal{A}_1, \Delta_1, \epsilon_1)$ and $(\mathcal{A}_2, \Delta_2, \epsilon_2)$ be two $\mathcal{O}_X/{\mathbb{K}_X}$-bialgebras. A morphism $$\psi : (\mathcal{A}_1, \Delta_1, \epsilon_1)\rightarrow (\mathcal{A}_2, \Delta_2, \epsilon_2) $$ between the sheaves of $\mathcal{O}_X/{\mathbb{K}_X}$-bialgebras is a \\
$(a)$ morphism of sheaves of unital $\mathbb{K}_X$-algebras, and \\
$(b)$ morphism of sheaves of coalgebras in the category of left $\mathcal{O}_X$-modules.
\end{Def} 
It follows that the $\mathcal{O}_X/{\mathbb{K}_X}$-bialgebras and their morphisms forms a category, we call it the category of $\mathcal{O}_X/{\mathbb{K}_X}$-bialgebras. 
\newpage
\begin{Exm}
	\textbf{The universal enveloping algebroid $\mathscr{U}(\mathcal{O}_X,\mathcal{L})$ as $\mathcal{O}_X/{\mathbb{K}_X}$-bialgebra:} \label{universal alg} 
	
Let $(\mathcal{L},[\cdot,\cdot],\mathfrak{a})$ be a Lie algebroid over one of the special $a$-space $(X,\mathcal{O}_X)$. Consider the presheaf $U \mapsto (\mathcal{U}(\mathcal{O}_X(U),\mathcal{L}(U)),\Delta_U,\epsilon_U)$ of $\mathcal{O}_X/{\mathbb{K}_X}$-bialgebra.
	 From the presheaf $\mathcal{U}(\mathcal{O}_X,\mathcal{L})$,
	 we get a presheaf of associative unital $\mathbb{K}$-algebras and left $\mathcal{O}_X$-module structure on $\mathcal{U}(\mathcal{O}_X,\mathcal{L})\bar{\otimes}_{\mathcal{O}_X}\mathcal{U}(\mathcal{O}_X,\mathcal{L}):=$ $^p(\mathscr{U}(\mathcal{O}_X,\mathcal{L})\bar{\otimes}_{\mathcal{O}_X}\mathscr{U}(\mathcal{O}_X,\mathcal{L}))$. 
Thus, we have a canonical presheaf homomorphisms $$^p \Delta : \mathcal{U}(\mathcal{O}_X,\mathcal{L}) \rightarrow   \mathcal{U}(\mathcal{O}_X,\mathcal{L})\bar{\otimes}_{\mathcal{O}_X}\mathcal{U}(\mathcal{O}_X,\mathcal{L}) $$ with $^p \Delta(U) = \Delta_U$ and	 $^p \epsilon : \mathcal{U}(\mathcal{O}_X,\mathcal{L}) \rightarrow \mathcal{O}_X$ with $^p \epsilon(U) = \epsilon_U$.

Hence, 	$(\mathcal{U}(\mathcal{O}_X,\mathcal{L}), ^p \Delta, ^p \epsilon)$ is a presheaf of $\mathcal{O}_X/\mathbb{K}_X$-bialgebras.
	
It follows that $\mathcal{U}(\mathcal{O}_X,\mathcal{L})\bar{\otimes}_{\mathcal{O}_X}\mathcal{U}(\mathcal{O}_X,\mathcal{L})\hookrightarrow \mathscr{U}(\mathcal{O}_X,\mathcal{L})\bar{\otimes}_{\mathcal{O}_X}\mathscr{U}(\mathcal{O}_X,\mathcal{L})$. By the universal property of the sheafification we get 
	$$\Delta = (^p\Delta)^\# : \mathscr{U}(\mathcal{O}_X,\mathcal{L}) \rightarrow   \mathscr{U}(\mathcal{O}_X,\mathcal{L})\bar{\otimes}_{\mathcal{O}_X}\mathscr{U}(\mathcal{O}_X,\mathcal{L})$$ a morphism between sheaves of associative unital $\mathbb{K}$-algebras and left $\mathcal{O}_X$-modules, 
	$$\epsilon = (^p\epsilon)^\# : \mathscr{U}(\mathcal{O}_X,\mathcal{L}) \rightarrow \mathcal{O}_X $$
	 morphism between sheaves of $\mathcal{O}_X$-modules where 
	 $\Delta \circ i = (\cdot)^\# \circ$ $ ^p \Delta$ and $\epsilon \circ i =$ $^p \epsilon$.
Consequently, $(\mathscr{U}(\mathcal{O}_X,\mathcal{L}),\Delta,\epsilon)$ is a sheaf of $\mathcal{O}_X/{\mathbb{K}_X}$-bialgebras.
	
Observe that $\mathscr{U}(\mathcal{O}_X,\mathcal{L})$ is a cocommutative $\mathcal{O}_X$-coalgebra in the sense that there is a natural cocommutative coassociative comultiplication  \\
$\Delta : \mathscr{U}(\mathcal{O}_X,\mathcal{L}) \rightarrow \mathscr{U}(\mathcal{O}_X,\mathcal{L})\otimes_{\mathcal{O}_X} \mathscr{U}(\mathcal{O}_X,\mathcal{L})$, and counit $\epsilon : \mathscr{U}(\mathcal{O}_X,\mathcal{L}) \rightarrow \mathcal{O}_X$ respectively,
 which are \textbf{locally} given by the following formulas (using the Sweedler convention) \cite{CRV} 
\begin{center}
	$\Delta(f) = f \otimes 1= 1 \otimes f$,\\
	$\Delta(\bar{D}) = \bar{D} \otimes 1 + 1 \otimes \bar{D}$,\\
	$\Delta (D' D'') = \sum D'_{(1)} D''_{(1)} \otimes D'_{(2)} D''_{(2)}$,\\  
	$\epsilon(D) = D(1)$, 
\end{center}
for a section $f$ of $\mathcal{O}_X$, for a section $D$ of $\mathcal{L}$ and for sections $D', D''$ of $\mathscr{U}(\mathcal{O}_X,\mathcal{L})$.
\end{Exm}

\textbf{Remark:} In \cite{MK}, the notion of free Lie algebroid $\mathcal{P}_X$ is constructed for tangent sheaf $\mathcal{T}_X$.  It forms a locally free sheaf of Lie-Rinehart algebras over $(X,\mathcal{O}_X)$ of infinite rank. The universal enveloping algebroid $\mathscr{U}(\mathcal{O}_X,\mathcal{P}_X) =:\mathbb{D}_X$ is described as sheaf of non commutative differential operators on $X$. It is presented therein as a $\mathcal{O}_X$-bialgebra, can be viewed as $\mathcal{O}_X/{\mathbb{K}_X}$-bialgebra defined above.

\subsection{The dual jet space of a Lie algebroid :}  The notion of jet algebroid is  defined as the dual of the universal enveloping algebroid:
\begin{center}
	$\mathscr{J}(\mathcal{O}_X,\mathcal{L}) := \mathscr{H}om_{\mathcal{O}_X} (\mathscr{U}(\mathcal{O}_X,\mathcal{L}), \mathcal{O}_X) $.
\end{center}
 It is sheafification of the presheaf associated to jet algebras $\mathcal{J}(\mathcal{O}_X(U),\mathcal{L}(U))$ of the $(\mathbb{K},\mathcal{O}_X(U))$-Lie-Rinehart algebra $\mathcal{L}(U)$ (see \cite{AP,CV,KP}).
If $\mathcal{L}$ is locally free $\mathcal{O}_X$-module of constant rank, we can view $\mathscr{J}(\mathcal{O}_X,\mathcal{L})$ as projective limit of $n$-jets of $\mathcal{L}$, that is 
\begin{center}
	$\mathscr{J}(\mathcal{O}_X,\mathcal{L}) = \underset{n}{\varprojlim }\mathscr{J}^n(\mathcal{O}_X,\mathcal{L})$  (as $\mathscr{U}(\mathcal{O}_X,\mathcal{L}) = \underset{n}{\varinjlim}$  $\mathscr{U}_{(n)}(\mathcal{O}_X,\mathcal{L}) )$,
\end{center}
where space of $n$-jets of $\mathcal{L}$ is defined as $\mathscr{J}^n(\mathcal{O}_X,\mathcal{L}) := \mathscr{H}om_{\mathcal{O}_X} (\mathscr{U}_{(n)}(\mathcal{O}_X,\mathcal{L}), \mathcal{O}_X)$.\\ 
By definition, $\mathscr{J}(\mathcal{O}_X,\mathcal{L})$ is complete and $\mathscr{U}(\mathcal{O}_X,\mathcal{L})$ is cocomplete with respect to the canonical PBW filtration.

One can dualize the structures on $\mathscr{U}(\mathcal{O}_X,\mathcal{L})$. The product on $\mathscr{J}(\mathcal{O}_X,\mathcal{L})$ is induced from the coproduct of $\mathscr{U}(\mathcal{O}_X,\mathcal{L})$ on each space of sections, which is \textbf{locally} defined by 
	$$(\phi_1 \phi_2) (D) := \phi_1 (D_{(1)}) \phi_2 (D_{(2)})$$ for sections $\phi_1, \phi_2 \in \mathscr{J}(\mathcal{O}_X,\mathcal{L})$ and a section $D \in \mathscr{U}(\mathcal{O}_X,\mathcal{L})$.
By cocommutativity of $\mathscr{U}(\mathcal{O}_X,\mathcal{L})$, this defines a commutative algebra structure on $\mathscr{J}(\mathcal{O}_X,\mathcal{L})$. The unit for this multiplication is \textbf{locally} given by the left counit $\epsilon :\mathscr{U}(\mathcal{O}_X,\mathcal{L}) \rightarrow \mathcal{O}_X$, since
	$$(\epsilon \phi) (D) = \epsilon (D_{(1)}) \phi (D_{(2)}) = \phi(\epsilon (D_{(1)}) D_{(2)})= \phi(D)$$
for a section $\phi$ in $\mathscr{J}(\mathcal{O}_X,\mathcal{L})$ and a section $D$ in $\mathscr{U}(\mathcal{O}_X,\mathcal{L})$.
The left and right $\mathcal{O}_X$-module structure on $\mathscr{U}(\mathcal{O}_X,\mathcal{L})$ provides $\mathcal{O}_X- \mathcal{O}_X$-bimodule structure on $\mathscr{J}(\mathcal{O}_X,\mathcal{L})$. 

Observe now that the product on $\mathscr{U}(\mathcal{O}_X,\mathcal{L})$ descends to a sheaf homomorphism $m : \mathscr{U}(\mathcal{O}_X,\mathcal{L}) \otimes_{\mathcal{O}_X} \mathscr{U}(\mathcal{O}_X,\mathcal{L}) \rightarrow \mathscr{U}(\mathcal{O}_X,\mathcal{L})$. We can therefore dualize the product to obtain a coproduct $\Delta^* : \mathscr{J}(\mathcal{O}_X,\mathcal{L}) \rightarrow \mathscr{J}(\mathcal{O}_X,\mathcal{L}) \otimes_{\mathcal{O}_X} \mathscr{J}(\mathcal{O}_X,\mathcal{L})$ locally defined as
	$$\phi(D D') =: \Delta^*(\phi)(D \otimes D') = \phi_{(1)}(D \phi_{(2)}(D')) $$

Associativity of the multiplication implies that $\Delta^*$ is coassociative. The counit for this coproduct is given by $\epsilon^* : \phi \mapsto \phi(1_{\mathscr{U}(\mathcal{O}_X,\mathcal{L})})$. It follows that $(\mathscr{J}(\mathcal{O}_X,\mathcal{L}), \Delta^*, \epsilon^*)$  is  $\mathcal{O}_X/{\mathbb{K}_X}$-bialgebra.

\vspace{0.5cm}
\textbf{PBW Theorem for jet algebroid :} For a locally free Lie algebroid $\mathcal{L}$ over $(X, \mathcal{O}_X)$ of finite rank, say $r$, locally on some open neighborhood $U_x$ of $x$ we get $$\mathscr{J}(\mathcal{O}_X,\mathcal{L})(U_x) \cong \mathcal{O}_X(U_x) [[w_1, \dots, w_r]]$$
(since $\mathscr{J}(\mathcal{O}_X,\mathcal{L})$ is commutative $\mathcal{O}_X$-algebra and $\mathcal{L^*}$ is locally free $\mathcal{O}_X$-module of rank $r$ and whose local basis on $U_x$ is $\{w_1, \dots, w_r\}$). Thus, $\mathscr{J}(\mathcal{O}_X,\mathcal{L}) \cong \widehat{\mathcal{S}_{\mathcal{O}_X}\mathcal{L^*}}$ (this sheaf of symmetric algebras is formally completed with respect to the degree) as $\mathcal{O}_X$-algebras \cite{CRV,AP}.
\begin{Exm}	
 \textbf{Enveloping algebra as a generalized bialgebra:} 
	Let $R$ be an associative unital $\mathbb{K}$-algebra. Then the enveloping algebra of $R$ is denoted by 
	 $$R^e:=R \otimes_{\mathbb{K}} R^{op}$$  has a canonical left bialgebroid (or $R/{\mathbb{K}}$-bialgebra) structure (see\cite{KP}).

	As a sheaf theoretical generalization we get a $\mathcal{O}_X/{\mathbb{K}_X}$-bialgebra. Here, we consider $\mathcal{O}^e_X$ as the sheafification of the presheaf given by
	$$ U \mapsto \mathcal{O}_X(U)\otimes \mathcal{O}_X(U)^{op} $$ where the restriction map is defined by the component-wise restrictions.
		 
	 The multiplication $m : \mathcal{O}_X^e \otimes_{\mathbb{K}_X} \mathcal{O}_X^e \rightarrow \mathcal{O}_X^e$ locally expressed as
	 $$m((a_1 \otimes b_1),(a_2 \otimes b_2))=a_1 a_2 \otimes b_2 b_1$$ for sections $a_1,a_2$  in $\mathcal{O}_X$ and $b_1,b_2$ in $\mathcal{O}_X^e$. The unit $\eta=i \circ \eta_1:\mathbb{K}_X \rightarrow \mathcal{O}^e_X$ is given by the compositions of the canonical maps $\eta_1:\mathbb{K}_X \rightarrow \mathcal{O}_X$ and $i:\mathcal{O}_X \hookrightarrow \mathcal{O}^e_X$.
	
	 The comultiplication and counit are locally defined as follows: 
$$\Delta(a \otimes b)=(a \otimes 1)\otimes_{\mathcal{O}_X} (1 \otimes b)$$ 
	  and $\epsilon(a \otimes b)=ab $  for sections $a,b$ in $\mathcal{O}_X^e$.
	  
More generally, if we consider any $\mathbb{K}_X$-algebra $\mathcal{A}$, the sheafification of the presheaf of enveloping algebras $\mathcal{A}^e:= \mathcal{A}\otimes \mathcal{A}^{op}$ forms a natural $\mathcal{A}/{\mathbb{K}_X}$-bialgebra structures whose local descriptions is as given above. 
\end{Exm}

\subsection{Primitive filtration of a $\mathcal{O}_X/{\mathbb{K}_X}$-bialgebra :}\label{primitive}
For a given $\mathcal{O}_X/{\mathbb{K}_X}$-bialgebra $\mathcal{A}$, it follows that $\bar{\mathcal{A}} :=$ $\mathscr{K}er$ $\epsilon$ is a subsheaf of $  \mathcal{O}_X$-modules and $\mathcal{A} = \mathcal{O}_X \oplus \bar{\mathcal{A}}$ as direct sum of $\mathcal{O}_X$-modules. Moreover, the $\mathcal{O}_X$-submodule, $\bar{\mathcal{A}}$ is a subsheaf of $\mathbb{K}_X$-algebras equipped with cocommutative coassociative coproduct
$$\bar{\Delta} : \bar{\mathcal{A}} \rightarrow \bar{\mathcal{A}} \otimes_{\mathcal{O}_X} \bar{\mathcal{A}}$$ is locally given by $\bar{\Delta}(s) =\Delta(s) -  (s \otimes 1 + 1 \otimes s)$ for  a section $s$ in $\bar{\mathcal{A}}$. Consequently, $\bar{\mathcal{A}}$ is a non-counital $\mathcal{O}_X$-coalgebra.

Here, the bialgebra  $\mathcal{A}$ can be reconstructed from $\mathcal{O}_X$-coalgebra $\bar{\mathcal{A}}$ with the coproduct $ \bar{\Delta}$ and the structure of sheaf of $\mathbb{K}_X$-algebras on $\bar{\mathcal{A}}$. Suppose $\bar{\Delta}^{(n)} $  the iterated coproduct between $\mathcal{O}_X$-modules where $$\bar{\Delta}^{(1)} = \bar{\Delta} ~~\mbox{and}~~ \bar{\Delta}^{(n)} = (\bar{\Delta} \otimes \mbox{Id}_{\bar{\mathcal{A}}}) \circ \bar{\Delta}^{(n-1)}~~\mbox{for}~~ n\geq 2.$$
If 
$\bar{\mathcal{A}}_n =$ $\mathscr{K}$er $\bar{\Delta}^{(n)}$ then we get
a filtration (of $\mathcal{O}_X$-modules $\bar{\mathcal{A}}$) written as follows: 
$${0} = \bar{\mathcal{A}}_0 \subset \bar{\mathcal{A}}_1 \subset \bar{\mathcal{A}}_2 \subset \cdots .$$
It is refered as the primitive filtration of $\bar{\mathcal{A}}$. We write ${\mathcal{A}}_n = \mathcal{O}_X \oplus \bar{\mathcal{A}}_n $ for $ n \geq 0$.
 The subsheaf of $\mathcal{O}_X$-modules $\bar{\mathcal{A}}_1$ is called (sub)sheaf of primitive elements and is also denoted by $\mathscr{P}(\mathcal{A})$.
The bialgebra $\mathcal{A}$ is said to be cocomplete 
if $\mathcal{A} = \cup_{n \geq 0} {\mathcal{A}}_n$.
It is called locally graded  free if each of the sheaf of subquotients 
$${\mathcal{A}}_{n+1}/{{\mathcal{A}}_n} = {\bar{\mathcal{A}}}_{n+1}/{{\bar{\mathcal{A}}}_n}$$ is a locally free $\mathcal{O}_X$-modules. For this we have  $\mathscr{P}(\mathcal{A})$ is a locally free $\mathcal{O}_X$-module
We say that $\mathcal{A}$ is of finite type if $\mathscr{P}(\mathcal{A})$ is locally free $\mathcal{O}_X$-module of finite rank.

\begin{Rem}
The universal enveloping algebra of a Lie algebroid over a smooth manifold is cocomplete, graded projective and finite type (see \cite{MM}). 
Here, we consider the presheaf $\mathcal{U}(\mathcal{O}_X, \mathcal{L})$ for a Lie algebroid $\mathcal{L}$ with $\mathcal{L}$ is a locally free left $\mathcal{O}_X$-module. It follows that 
the presheaf of natural filtration and primitive filtration coincides on each special open set $U_x$ for the presheaf $\mathcal{U}(\mathcal{O}_X, \mathcal{L})$. There is an isomorphism of $\mathcal{O}_{X,x}$-modules between the induced stalks of the two types of presheaves. On application of sheafification functor we obtain sheaf isomorphism between the filtrations of the sheaf $\mathscr{U}(\mathcal{O}_X, \mathcal{L})$.
The universal enveloping algebroid $\mathscr{U}(\mathcal{O}_X, \mathcal{L})$ is cocomplete, locally graded free. Furthermore,  if $\mathcal{L}$ is of finite rank then $\mathscr{U}(\mathcal{O}_X, \mathcal{L})$ is of finite type.
\end{Rem}
\subsection{Sheaf of Lie-Rinehart algebras of primitive elements :}
Let $\mathcal{A}$ be a $\mathcal{O}_X/\mathbb{K}_X$-bialgebra. We want to construct its associated Lie algebroid $(\mathscr{P}(\mathcal{A}), [\cdot,\cdot]_c, \mathfrak{a})$ over an $a$- space $(X, \mathcal{O}_X)$.

The counit map $\epsilon : \mathcal{A}\rightarrow \mathcal{O}_X$ induces  sheaf homomorphism $\rho : \mathcal{A} \rightarrow \mathscr{E}nd_{\mathbb{K}_X}(\mathcal{O}_X)$ of $\mathbb{K}_X$-algebras. This map is locally expressed as 
$\rho(s)(f) = \epsilon(s.f)$
for a section $s \in \mathcal{A}$ and a section $f \in \mathcal{O}_X$. Thus, for each open set $U$ of $X$ we get the $(\mathbb{K},\mathcal{O}_X(U))$-Lie-Rinehart algebra $(\mathcal{P}(\mathcal{A}(U)),[\cdot,\cdot]_c,\mathfrak{a}_U)$ consisting with primitive elements of $\mathcal{O}_X/{\mathbb{K}}$-bilagebra $\mathcal{A}(U)$. The anchor map $$\mathfrak{a}_U : (\mathcal{P}(\mathcal{A}(U)),[\cdot,\cdot]_c)\rightarrow (Der_{\mathbb{K}}(\mathcal{O}_X(U)),[\cdot,\cdot]_c)$$ is defined by 
$\mathfrak{a}_U(D)(f) = \epsilon (U)(D \cdot f)$ for $f\in \mathcal{O}_X(U)$ and $D\in \mathcal{P}({\mathcal{A}(U)})$.

Now, for a pair of open sets $V,U$ in $X$ with $V \subset U$  we have 
the restriction morphism  $res_{UV}^{\mathcal{A}}  : \mathcal{A}(U) \rightarrow \mathcal{A}(V) $. It  
induces a morphism $res_{UV}^{\mathscr{P}(\mathcal{A})}$ between the $(\mathbb{K},\mathcal{O}_X(U))$ Lie-Rinehart algebras $(\mathcal{P}(\mathcal{A}(U)), [\cdot,\cdot]_c,\mathfrak{a}_U)$ and $(\mathbb{K},\mathcal{O}_X(V))$ Lie-Rinehart algebra $(\mathcal{P}(\mathcal{A}(V)),[\cdot,\cdot]_c,\mathfrak{a}_V) $.
Then we get $res_{UV}^{\mathcal{T}} \circ \mathfrak{a}_U = \mathfrak{a}_V \circ res_{UV}^{\mathscr{P}(\mathcal{A})}$.
Thus, it follows that
\begin{center}
	$\mathscr{P}(\mathcal{A}): {Open}_X \rightarrow \mathbb{K}$-Lie Algebras\\
	$U \mapsto (\mathcal{P}(\mathcal{A}(U)),[\cdot,\cdot]_c,\mathfrak{a}_U)$
\end{center}
forms a sheaf of Lie-Rinehart algebras with the restriction morphism $ res_{UV}^{\mathscr{P}(\mathcal{A})}$ (the sheaf structure on $\mathscr{P}(\mathcal{A})$ induces canonically from the sheaf $\mathcal{A}$).

Thus $(\mathscr{P}(\mathcal{A}), [\cdot,\cdot]_c, \mathfrak{a})$ is a Lie algebroid over $(X, \mathcal{O}_X)$ 
with anchor map 
$$\mathfrak{a} : (\mathscr{P}(\mathcal{A}), [\cdot,\cdot])
\rightarrow (\mathcal{D}er_{\mathbb{K}_X}(\mathcal{O}_X), [\cdot,\cdot]_c)$$ is defined by
$\mathfrak{a}(U) = \mathfrak{a}_U$ on the space of sections.


\vspace{0.5cm}
\textbf {Primitive elements for Weyl algebra associated with normal crossing:}
Consider the $\mathbb{K}$-algebra $\tilde{R}:= \mathbb{K}[x,y]$. The $(\mathbb{K},\tilde{R})$-Lie-Rinehart algebra of derivations  $Der_{\mathbb{K}}(\tilde{R})=\langle \{\partial_x, \partial_y\} \rangle$.
Thus the Weyl algebra of $\tilde{R}$ is 
$$\mathcal{D}iff(\tilde{R})\cong \mathcal{U}(\tilde{R}, Der_{\mathbb{K}}(\tilde{R}))$$
forms a $\tilde{R}/{\mathbb{K}}$-bialgebra. Consider the $\mathbb{K}$-algebra $R:=\mathbb{K}[x,y]/\langle xy \rangle$. The associated space of differential operators $\mathcal{D}iff(R)$ and the associated universal enveloping algebra $\mathcal{U}(R, Der_{\mathbb{K}}(R))$ with $Der_{\mathbb{K}}(R)=\langle \{x\partial_x, y\partial_y \} \rangle$, forms a $R/{\mathbb{K}}$-bialgebras.

For the first case $Der_{\mathbb{K}}(\tilde{R})$ is a projective (in fact free) $\tilde{R}$-module of rank $2$, by using Cartier-Milnor-Moore theorem for Lie-Rinehart algebra \cite{MM}, we get the space of primitive elements of $\mathcal{D}iff(\tilde{R})$ is $\mathcal{P}(\mathcal{U}(\tilde{R}, Der_{\mathbb{K}}(\tilde{R})))\cong Der_{\mathbb{K}}(\tilde{R})$. But, this approach will not work for the second case, since there $Der_{\mathbb{K}}(R)$ is not a projective module over the base algebra $R$.

Here we compute the space of primitive elements $\mathcal{P}(\mathcal{D}iff(R))$. 
First note that $Der_{\mathbb{K}}(R)\subset \mathcal{P}(\mathcal{D}iff(R))$. Here $\Delta_R(x \partial_x)=x \partial_x\otimes_R 1+ 1 \otimes_R x\partial_x$ and similarly $\Delta(y \partial_y)=y \partial_y\otimes_R 1+ 1 \otimes_R y\partial_y$.
Consider the second order differential operator $x \partial_x \circ y \partial_y$, then
$\Delta(x \partial_x \circ y \partial_y)=x\partial_x \circ y \partial_y \otimes_R 1+1 \otimes_R x \partial_x \circ y \partial_y$ (as $y \partial_y \otimes_R x \partial_x = 0 = x \partial_x \otimes_R y \partial_y)$ and thus $\mathcal{D}iff_{(2)}(R) \setminus R \subset \mathcal{P}(\mathcal{D}iff(R))$ where $\mathcal{D}iff_{(2)}(R)$
is the second filtered $R$-module for the natural filtration of $\mathcal{D}iff(R)$ (like for the case $\mathcal{U}(R, Der_{\mathbb{K}}(R))$). Note that $y \partial_y \circ x \partial_x = x \partial_x \circ y \partial_y -[x \partial_x, y \partial_y] \in \mathcal{P}(\mathcal{U}(R, Der_{\mathbb{K}}(R))$.
Consequently, $Der_{\mathbb{K}}(R)\subsetneq \mathcal{P}(\mathcal{D}iff(R))$ and 
thus, $Der_{\mathbb{K}}(R)$ is non-isomorphic to $\mathcal{P}(\mathcal{U}(R, Der_{\mathbb{K}}(R))$.

This idea can be extended to its associated complex geometric situation.

\section{Generalized Cartier-Milnor-Moore Theorem} 

Let us fix an special $a$-space $(X, \mathcal{O}_X)$ and consider the following three categories. 
\begin{itemize}
\item   $\mathcal{C}$ be the category of Lie algebroids over $(X,\mathcal{O}_X)$,  
\item $\bar{\mathcal{D}}$ denotes the category of presheaves of $\mathcal{O}_X/{\mathbb{K}_X} $-bialgebras, and
\item  $\mathcal{D}$ denotes the category of $\mathcal{O}_X/{\mathbb{K}_X} $-bialgebras.
\end{itemize}
 \subsection{$\mathscr{U}(\mathcal{O}_X,-)$ or simply $\mathscr{U}$ as a functor :} Following notations in previous sections, the assignment $\mathcal{L} \mapsto \mathcal{U}(\mathcal{O}_X, \mathcal{L})$ defines a covariant functor $$\mathcal{U}:\mathcal{C} \rightarrow \bar{\mathcal{D}}.$$ Then we define the functor $\mathscr{U}: \mathcal{C}\rightarrow \mathcal{D}$ by composition with the sheafification functor $$(\cdot)^\#: \bar{\mathcal{D}} \rightarrow \mathcal{D}.$$

Here we proceed as follows for morphisms, in order to define the composition of the functor $\mathcal{U}$ followed by sheafification.
Let $\phi : (\mathcal{L}_1, [\cdot,\cdot]_1, \mathfrak{a}_1) \rightarrow (\mathcal{L}_2, [\cdot,\cdot]_2, \mathfrak{a}_2) $ be a morphism in $\mathcal{C}$. We find a morphism $\widetilde {^p \phi}$ in the category $\bar{\mathcal{D}}$ where $$\widetilde {^p \phi} : \mathcal{U}(\mathcal{O}_X, \mathcal{L}_1) \rightarrow \mathcal{U}(\mathcal{O}_X, \mathcal{L}_2)$$  is defined by $\widetilde {^p \phi}(U) = \widetilde {\phi(U)}$ for every open set $U$ of $X$. Note that $\widetilde{{\phi(U)}}$ is obtained by evaluating the functor $\mathcal{U}(\mathcal{O}_X(U), -)$ on the morphism $\phi(U)$.
 
Finally we obtain the morphism of sheaves of $\mathcal{O}_X/{\mathbb{K}_X} $-bialgebras denoted by ${\widetilde {\phi}}:\mathscr{U}(\mathcal{O}_X, \mathcal{L}_1) \rightarrow \mathscr{U}(\mathcal{O}_X, \mathcal{L}_2) $ where $\widetilde{\phi}= (\cdot)^\# \circ \widetilde {^p \phi}$. 

\subsection{$\mathscr{P}(-)$ or simply $\mathscr{P}$ as a functor :}  Following notations in previous sections, we show that the assignment $\mathcal{A} \mapsto \mathscr{P}(\mathcal{A})$ defines a covariant functor 
$$\mathscr{P} : \mathcal{D} \rightarrow  \mathcal{C}.$$
We have seen before that if $\mathcal{A}$ is an object in  the category $\mathcal{D}$ then $\mathscr{P}(\mathcal{A})$ is an object in the category $\mathcal{C}$.
Let $\psi : \mathcal{A} \rightarrow \mathcal{\tilde{A}}$ be a morphism in the category $\mathcal{D}$.
Then, on each open set $U$ of $X$, we have the morphism of $(\mathbb{K}, \mathcal{O}_X(U))$-Lie-Rinehart algebras
$$\phi_U := \psi(U)|_{\mathcal{P}(\mathcal{A}(U))} : \mathcal{P}(\mathcal{A}(U))\rightarrow \mathcal{P}(\mathcal{\tilde{A}}(U)),$$ which are compatible with the restriction morphisms.

Thus we obtain a morphism of Lie algebroids $\phi:\mathscr{P}(\mathcal{A}) \rightarrow \mathscr{P}(\mathcal{\tilde{A}})$ defined by $\phi(U)=\phi_U$.
 

Next, we use results from \cite{MM} as the local descriptions.
\begin{Note} \label{pri of L}
		The canonical maps (\ref{Uni env alg}) 
		\begin{equation*}	
		\iota_{\mathcal{L}}:\mathcal{L} \rightarrow  \mathscr{U}(\mathcal{O}_X, \mathcal{L}) ~~\mbox{and}~~
		\mathscr{U}(\mathcal{O}_X, \mathcal{L}) \rightarrow \mathscr{P}(\mathscr{U}(\mathcal{O}_X, \mathcal{L}))
		\end{equation*} obtained from the functor $\mathscr{P}$, provides a morphism $\alpha_{\mathcal{L}}:\mathcal{L} \rightarrow \mathscr{P}(\mathscr{U}(\mathcal{O}_X, \mathcal{L}))$ in the category $\mathcal{C}$ (see Example \ref{universal alg}) via compositions of the maps. On each open set $U$, we have the canonical homomorphism $\alpha_{\mathcal{L}(U)}:\mathcal{L}(U)\rightarrow \mathcal{P}(\mathcal{U}(\mathcal{O}_X(U),\mathcal{L}(U)))$ compatible with restriction morphism, and sheafification provides the map $\alpha_{\mathcal{L}}$.
	
	In addition, if $\mathcal{L}$ is locally free $\mathcal{O}_X$-module, the morphism $\alpha_{\mathcal{L}}$ become an isomorphism i.e., $\mathcal{L} \cong \mathscr{P}(\mathscr{U}(\mathcal{O}_X, \mathcal{L}))$.
\end{Note} \label{uni of A}
\begin{Note}
	For an object $\mathcal{A}$ in the category $\mathcal{D}$, the inclusion map $\mathscr{P}(\mathcal{A}) \hookrightarrow \mathcal{A}$ provides a morphism $$\beta_{\mathcal{A}}: \mathscr{U}(\mathcal{O}_X,\mathscr{P}(\mathcal{A})) \rightarrow \mathcal{A}$$ in $\mathcal{D}$ by using the universal property of the universal enveloping algebroid (Remark \ref{Uni prop}) $\mathscr{U}(\mathcal{O}_X,\mathscr{P}(\mathcal{A}))$. On each open set $U$, we have the canonical homomorphism $\beta_{\mathcal{A}(U)} : \mathcal{U}(\mathcal{O}_X(U), \mathcal{P}(\mathcal{A}(U))) \rightarrow \mathcal{A}(U)$ compatible with restriction morphism, provides the map $\beta_\mathcal{A}$
	
	In addition, if $\mathcal{A}$ is cocomplete locally graded free $\mathcal{O}_X$-module, the morphism $\beta_{\mathcal{A}}$ become an isomorphism i.e., $ \mathcal{A} \cong \mathscr{U}(\mathcal{O}_X,\mathscr{P}(\mathcal{A}))$. 
\end{Note}
\subsection{Cartier-Milnor-Moore Theorem for Lie algebroids over $a$-spaces}
Here we consider a generalized version of Cartier-Milnor-Moore Theorem (CMM theorem in short hand notation) in the sheaf theoretic terminology.

 We denote by $C$ the category of $(\mathbb{K},R)$-Lie-Rinehart algebras and the category of $R/{\mathbb{K}}$-bialgebras by $D$. In terms of the notations previously used, we consider the next result. 
\begin{Thm}
\begin{enumerate}
\item The functor $\mathscr{U}$ is left adjoint to the functor $\mathscr{P}$.
\item  

The functors $\mathscr{U}$ and $\mathscr{P}$ restrict to an equivalence between the full subcategory $\mathcal{C'}$ of locally free Lie algebroids over $(X,\mathcal{O}_X)$  
	and the full subcategory $\mathcal{D'}$ of cocomplete locally graded  free $\mathcal{O}_X/\mathbb{K}_X$-bialgebras. 
\end{enumerate}	
\end{Thm}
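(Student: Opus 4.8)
The plan is to establish part (1) through explicit unit and counit natural transformations and then to deduce part (2) by restricting them. For the adjunction I would take the unit to be the natural transformation $\alpha : \mathrm{Id}_{\mathcal{C}} \Rightarrow \mathscr{P}\circ\mathscr{U}$ whose components $\alpha_{\mathcal{L}} : \mathcal{L}\to\mathscr{P}(\mathscr{U}(\mathcal{O}_X,\mathcal{L}))$ are built in Note \ref{pri of L}, and the counit to be $\beta : \mathscr{U}\circ\mathscr{P} \Rightarrow \mathrm{Id}_{\mathcal{D}}$ whose components $\beta_{\mathcal{A}} : \mathscr{U}(\mathcal{O}_X,\mathscr{P}(\mathcal{A}))\to\mathcal{A}$ come from the Note that follows it (the one supplied by the universal property of Remark \ref{Uni prop}). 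First I would confirm that $\alpha$ and $\beta$ are genuinely natural: both arise by sheafifying presheaf morphisms defined open-set-by-open-set via the functor $\mathcal{U}(\mathcal{O}_X(U),-)$ and the primitive-element construction of \cite{MM}, so naturality reduces to the naturality of the corresponding Moerdijk--Mr\v{c}un transformations on each $\mathcal{U}(\mathcal{O}_X(U),\mathcal{L}(U))$ and then passes to sheaves because sheafification is functorial.

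The core of part (1) is the pair of triangle identities $\mathscr{P}(\beta_{\mathcal{A}})\circ\alpha_{\mathscr{P}(\mathcal{A})}=\mathrm{id}_{\mathscr{P}(\mathcal{A})}$ and $\beta_{\mathscr{U}(\mathcal{O}_X,\mathcal{L})}\circ\mathscr{U}(\alpha_{\mathcal{L}})=\mathrm{id}_{\mathscr{U}(\mathcal{O}_X,\mathcal{L})}$. I would verify these first on each open set $U$, where they are precisely the triangle identities of the Lie--Rinehart/$R/\mathbb{K}$-bialgebra adjunction of \cite{MM} applied to $\mathcal{L}(U)$, and then lift them to the sheaf level by sheafification. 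Equivalently, one can build the Hom-set bijection directly by $g\mapsto\mathscr{P}(g)\circ\alpha_{\mathcal{L}}$ with inverse $f\mapsto\beta_{\mathcal{A}}\circ\mathscr{U}(f)$; here the essential observations are that any $\mathcal{O}_X/\mathbb{K}_X$-bialgebra morphism $g:\mathscr{U}(\mathcal{O}_X,\mathcal{L})\to\mathcal{A}$ carries primitive sections to primitive sections (so $\mathscr{P}(g)$ is defined) and that the universal property of Remark \ref{Uni prop} forces uniqueness of the extension in the opposite direction. That the two assignments are mutually inverse is exactly the content of the triangle identities.

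For part (2) I would first check that the two functors preserve the distinguished subcategories. If $\mathcal{L}$ is locally free then $\mathscr{U}(\mathcal{O}_X,\mathcal{L})$ is cocomplete and locally graded free by the Remark preceding this section, giving $\mathscr{U}(\mathcal{C}')\subseteq\mathcal{D}'$; conversely, for $\mathcal{A}$ cocomplete and locally graded free the sheaf of primitives $\mathscr{P}(\mathcal{A})=\bar{\mathcal{A}}_1$ is a locally free $\mathcal{O}_X$-module by the primitive-filtration discussion, giving $\mathscr{P}(\mathcal{D}')\subseteq\mathcal{C}'$. On these subcategories the unit $\alpha_{\mathcal{L}}$ is an isomorphism whenever $\mathcal{L}$ is locally free (Note \ref{pri of L}) and the counit $\beta_{\mathcal{A}}$ is an isomorphism whenever $\mathcal{A}$ is cocomplete locally graded free (the following Note). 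Since an adjunction whose unit and counit are isomorphisms on full subcategories restricts to an equivalence between them, this yields the stated categorical equivalence between $\mathcal{C}'$ and $\mathcal{D}'$.

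The step I expect to be most delicate is the compatibility between the local, open-set-wise adjunction of \cite{MM} and the global sheaf-theoretic functors, that is, showing that sheafification transports the triangle identities and the isomorphism statements faithfully. The subtlety is that $\mathscr{U}$ incorporates a sheafification while $\mathscr{P}$ is computed section-wise, so neither $\mathscr{P}\circ\mathscr{U}$ nor $\mathscr{U}\circ\mathscr{P}$ is literally a section-wise functor. I would control this by working over the special free cover $\{U_x\}$ for locally free $\mathcal{L}$ and by invoking Lemma \ref{iso on sheaf level} and Lemma \ref{iso on stalk level} to reduce every identity and every isomorphism claim to the stalks, where the projective (indeed free) Lie--Rinehart case of \cite{MM} applies verbatim.
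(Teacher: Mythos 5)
Your proposal is correct and follows essentially the same route as the paper: the unit $\alpha$ and counit $\beta$ from the two Notes, triangle identities checked open-set-wise via the Moerdijk--Mr\v{c}un adjunction and then transported through sheafification, and part (2) obtained by showing $\alpha_{\mathcal{L}}$ and $\beta_{\mathcal{A}}$ become isomorphisms on the subcategories using the special free cover together with Lemma \ref{iso on stalk level} and Lemma \ref{iso on sheaf level}. The only cosmetic difference is that the paper also spells out the Hom-set bijection $\mathscr{H}om_{\mathcal{C}}(\mathcal{L},\mathscr{P}(\mathcal{A}))\cong\mathscr{H}om_{\mathcal{D}}(\mathscr{U}(\mathcal{O}_X,\mathcal{L}),\mathcal{A})$ explicitly before the triangle identities, which you note as an equivalent formulation.
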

\begin{proof}(1) Let $\phi : \mathcal{L} \rightarrow \mathscr{P}(\mathcal{A})$ be a morphism in the category $\mathcal{C}$ of Lie algebroids over $(X,\mathcal{O}_X)$.
For a pair of open sets $V, U$ in $X$ where $V \subset U$ we have the morphisms  $\phi(U)$ and $\phi(V)$ in the category $C$ with  $ res_{UV}^{\mathscr{P}(\mathcal{A})} \circ \phi(U) = \phi(V) \circ res_{UV}^{\mathcal{L}}$.

As a byproduct of the universal enveloping algebra functor we get morphisms 
		$$\widetilde{\phi(U)} : \mathcal{U}(\mathcal{O}_X(U), \mathcal{L}(U)) \rightarrow \mathcal{A}(U)~~\mbox{and}~~
		\widetilde{\phi(V)} : \mathcal{U}(\mathcal{O}_X(V), \mathcal{L}(V)) \rightarrow \mathcal{A}(V)$$
	with the compatibility condition $ res_{UV}^{\mathcal{A}} \circ \widetilde{\phi(U)} = \widetilde{\phi(V)} \circ res_{UV}^{\mathcal{U}}$.
		Then $$\widetilde{\phi} : \mathcal{U}(\mathcal{O}_X, \mathcal{L}) \rightarrow \mathcal{A}$$ is a morphism between presheaf of $\mathcal{O}_X/\mathbb{K}_X$-bialgebras,  where $\widetilde{\phi}(U) : =\widetilde{\phi(U)}$ for every open set $U$ of $X$.
	By the universal property of sheafification we find a morphism 
	$\bar{\phi} : \mathscr{U}(\mathcal{O}_X ,\mathcal{L})\rightarrow \mathcal{A}$
	in $\mathcal{D}$ where $\bar{\phi} $ is locally given by the map $\widetilde{\phi}$.
	
On the other hand, suppose  $\psi : \mathscr{U}(\mathcal{O}_X,\mathcal{L}) \rightarrow \mathcal{A}$ is a morphism in $\mathcal{D}$.
	Then for any pair of open sets $V, U$ of $X$ where $V \subset U$ we have the morphisms 
	 $\psi(U)$ and $\psi(V)$ in the category $D$ with $ res_{UV}^{\mathcal{A}} \circ \psi(U) = \psi(V) \circ res_{UV}^{\mathscr{U}}$.

	The primitive elements functor gives (using Note \ref{pri of L}) the morphisms in $C$ written as $\phi_U : \mathcal{L}(U) \rightarrow \mathcal{P}(\mathcal{A}(U))$ and  $\phi_V : \mathcal{L}(V) \rightarrow \mathcal{P}(\mathcal{A}(V))$ respectively. 
	
It follows by using the functor $\mathscr{P}$ that $ res_{UV}^{\mathscr{P}(\mathcal{A})} \circ \phi_U = \phi_V \circ res_{UV}^{\mathcal{L}}$.
	
	Thus, we can form $\phi : \mathcal{L} \rightarrow \mathscr{P}(\mathcal{A})$, a morphism in $\mathcal{C}$ where $\phi(U) := \phi_U.$ 	
Consequently, we get a bijection of the sets $\mathscr{H}om_{\mathcal{C}}(\mathcal{L},\mathscr{P}(\mathcal{A}))$ and $\mathscr{H}om_{\mathcal{D}}(\mathscr{U}(\mathcal{O}_X ,\mathcal{L}), \mathcal{A})$.
 	
	For an object $\mathcal{L}$ in the category $\mathcal{C}$, we have the canonical morphism $$\alpha_{\mathcal{L}} : \mathcal{L} \rightarrow \mathscr{P}(\mathscr{U}(\mathcal{O}_X, \mathcal{L}))$$ in $\mathcal{C}$ (Note \ref{pri of L}). 

Let $\phi : \mathcal{L}_1 \rightarrow \mathcal{L}_2$ be a morphism in category $\mathcal{C}$. We find the sheaf homomorphisms $\alpha_{\mathcal{L}_1}$,  $\alpha_{\mathcal{L}_2}$ and $\widetilde{\phi}_{res}:=\mathscr{P}(\tilde{\phi})$, which are morphisms in $\mathcal{C}$ such that $\tilde{\phi}_{res} \circ \alpha_{\mathcal{L}_1} = \alpha_{\mathcal{L}_2} \circ \phi$.

 It defines unit of the adjunction defined by a natural transformation 
	\begin{center}
		$\alpha : Id_{\mathcal{C}} \rightarrow \mathscr{P} \circ \mathscr{U}$ where
		$\alpha_{\mathcal{L}}:\mathcal{L} \mapsto \mathscr{P}(\mathscr{U}(\mathcal{O}_X, \mathcal{L}))$.
	\end{center}
 Analogously for  a morphism $\psi: \mathcal{A}_1 \rightarrow \mathcal{A}_2$ in the category $\mathcal{D}$, we have compatible sheaf homomorphisms $\beta_{\mathcal{A}_1}, \beta_{\mathcal{A}_2}$ (Note \ref{uni of A}) and 
 $$\widetilde{\psi_{res}}:=\mathscr{U}\circ \mathscr{P}~(\psi).$$ 
 As a result, the counit of the adjunction is given by a natural transformation 
	\begin{center}
		$\beta :  \mathscr{U}\circ \mathscr{P}  \rightarrow Id_{\mathcal{D}}$ where 
		$ \beta_{\mathcal{A}}: \mathscr{U}(\mathcal{O}_X, \mathscr{P}(\mathcal{A})) \mapsto \mathcal{A}$.
	\end{center}
	The underlying presheaf homomorphisms satisfy the triangular identities on the space of sections of objects $\mathcal{L}$ in $\mathcal{C}$ and $\mathcal{A}$ in $\mathcal{D}$ respectively. Here we have
	\begin{center}
		$\mathcal{P}(\beta_{\mathcal{A}(U)}) \circ \alpha_{\mathcal{P}(\mathcal{A}(U))} = id_{\mathcal{P}(\mathcal{A}(U))}$\\
		and $\beta_{\mathcal{U}(\mathcal{O}_X(U), \mathcal{L}(U))} \circ \mathcal{U}(\mathcal{O}_X(U), \alpha_{\mathcal{L}(U)}) = id_{\mathcal{U}(\mathcal{O}_X(U), \mathcal{L}(U))}$.
	\end{center}
	By sheafification, it induces morphism in $\mathcal{C}$  and $\mathcal{D}$ respectively satisfying 
	\begin{center}
		$\mathscr{P}(\beta_{\mathcal{A}}) \circ \alpha_{\mathscr{P}(\mathcal{A})} = id_{\mathscr{P}(\mathcal{A})}$ and
		 $\beta_{\mathscr{U}(\mathcal{O}_X, \mathcal{L})} \circ \mathscr{U}(\mathcal{O}_X, \alpha_{\mathcal{L}}) = id_{\mathscr{U}(\mathcal{O}_X, \mathcal{L})}$.
	\end{center}
	(2)	Now, we have a special open cover $\{U_x\}_x$ of $X$ for which $\mathcal{L}|_{U_x}$ is free as left $\mathcal{O}_X|_{U_x}$-module. For any open set $V$ inside a special open set $U_x$, 
  we get the isomorphism 
	$$\alpha_{\mathcal{L}(V)} : \mathcal{L}(V)  \rightarrow \mathcal{P}(\mathcal{U}(\mathcal{O}_X(V), \mathcal{L}(V)))$$ in $C$. This is also compatible with the restriction maps and induces stalk-wise isomorphisms.
	Thus, $\alpha_{\mathcal{L}}$ is an isomorphism (using Note \ref{pri of L}, Lemma \ref{iso on sheaf level}) for $\mathcal{L}$ in the full subcategory of locally free Lie algebroids over $(X,\mathcal{O}_X)$.
	We get the unit of the adjunction $\alpha : Id_{\mathcal{C'}} \rightarrow \mathscr{P} \circ \mathscr{U}$  is a natural isomorphism, or, $Id_{\mathcal{C'}} \cong \mathscr{P} \circ \mathscr{U}$. 
	

In this case too, we have the special open cover $\{U_x\}_x$ of $X$ for which $\mathcal{A}(V)$ is cocomplete graded free $\mathcal{O}_X(V)/\mathbb{K}$-bialgebra for any open set $V$ inside some $U_x$. Thus, we get the isomorphism 
$$\beta_{{\mathcal{A}}(V)} : \mathcal{U}(\mathcal{O}_X(V), \mathcal{P}(\mathcal{A}(V))) \rightarrow \mathcal{A}(V)$$
in $D$, compatible with restriction maps. It induces stalk-wise isomorphism and provides $\beta_{\mathcal{A}}$ is an isomorphism (using Note \ref{uni of A}, Lemma \ref{iso on sheaf level}) for $\mathcal{A}$ in the full subcategory of cocomplete graded free $\mathcal{O}_X/{\mathbb{K}_X}$-bialgebras. Thus, the counit 
	$\beta : \mathscr{U}\circ \mathscr{P} \rightarrow Id_{\mathcal{D'}} $ of the adjunction is a natural isomorphism, or, $\mathscr{U} \circ \mathscr{P} \cong Id_{\mathcal{D'}}$.
	
\end{proof}
\begin{Cor}
	The functors $\mathscr{U}$ and $\mathscr{P}$ restrict to an equivalence between the full subcategory of locally free Lie algebroids over $(X,\mathcal{O}_X)$ of finite ranks
and the full subcategory of cocomplete locally graded free $\mathcal{O}_X/\mathbb{K}_X$-bialgebras of finite type.
	

\end{Cor}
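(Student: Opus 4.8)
The plan is to deduce this Corollary as a \emph{restriction} of the equivalence $\mathcal{C'} \simeq \mathcal{D'}$ already furnished by part (2) of the theorem above, by checking only that the functors $\mathscr{U}$ and $\mathscr{P}$ carry the two finiteness conditions to one another. Since the subcategories in question are \emph{full} subcategories of $\mathcal{C'}$ and $\mathcal{D'}$, no fresh work is required on morphisms: the hom-set bijection and the unit and counit natural isomorphisms $\alpha : Id_{\mathcal{C'}} \cong \mathscr{P}\circ\mathscr{U}$ and $\beta : \mathscr{U}\circ\mathscr{P} \cong Id_{\mathcal{D'}}$ simply restrict. Thus the whole content is to verify that each object lands in the correct target subcategory.

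First I would treat $\mathscr{U}$. Let $\mathcal{L}$ be a locally free Lie algebroid of finite rank. By the Remark following the definition of finite type, $\mathscr{U}(\mathcal{O}_X,\mathcal{L})$ is already cocomplete and locally graded free, so it is an object of $\mathcal{D'}$; and that same Remark records that finite rank of $\mathcal{L}$ forces $\mathscr{U}(\mathcal{O}_X,\mathcal{L})$ to be of finite type. The underlying reason I would spell out is Note \ref{pri of L}: there $\alpha_{\mathcal{L}}$ is an isomorphism, so $\mathscr{P}(\mathscr{U}(\mathcal{O}_X,\mathcal{L})) \cong \mathcal{L}$ as $\mathcal{O}_X$-modules, which is locally free of finite rank. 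By the very definition of finite type this says exactly that $\mathscr{U}(\mathcal{O}_X,\mathcal{L})$ is of finite type. Conversely, for $\mathscr{P}$, let $\mathcal{A}$ be a cocomplete locally graded free $\mathcal{O}_X/\mathbb{K}_X$-bialgebra of finite type. By the definition of finite type, $\mathscr{P}(\mathcal{A})$ is a locally free $\mathcal{O}_X$-module of finite rank; and since $\mathcal{A}$ lies in $\mathcal{D'}$, the isomorphism $\mathcal{A} \cong \mathscr{U}(\mathcal{O}_X,\mathscr{P}(\mathcal{A}))$ of Note \ref{uni of A} exhibits $\mathscr{P}(\mathcal{A})$ as a locally free Lie algebroid. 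Hence $\mathscr{P}(\mathcal{A})$ lies in the full subcategory of locally free Lie algebroids of finite rank, as required.

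With both functors shown to respect the finiteness conditions, the natural isomorphisms $Id_{\mathcal{C'}}\cong \mathscr{P}\circ\mathscr{U}$ and $\mathscr{U}\circ\mathscr{P}\cong Id_{\mathcal{D'}}$ restrict verbatim to the smaller full subcategories, giving the asserted equivalence. The only mildly delicate point I anticipate is the bookkeeping of the rank: one must use that the isomorphisms $\mathscr{P}\mathscr{U}(\mathcal{L})\cong\mathcal{L}$ and $\mathscr{U}\mathscr{P}(\mathcal{A})\cong\mathcal{A}$ match ranks on both sides, so that \textbf{finite rank} and \textbf{finite type} are genuinely interchanged rather than merely implied in one direction. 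This is immediate once one observes that $\alpha$ and $\beta$ are built (via Lemma \ref{iso on sheaf level}) from stalkwise isomorphisms of $\mathcal{O}_{X,x}$-modules, which preserve local rank, so that $\mathscr{P}(\mathcal{A})$ has finite rank precisely when $\mathcal{A}$ has finite type.
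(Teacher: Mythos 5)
Your proposal is correct and matches the paper's (implicit) argument: the corollary is stated without a separate proof precisely because it is the restriction of the theorem's equivalence, using the definition of finite type together with the Remark that $\mathscr{U}(\mathcal{O}_X,\mathcal{L})$ is of finite type when $\mathcal{L}$ has finite rank, and the isomorphisms $\alpha_{\mathcal{L}}$ and $\beta_{\mathcal{A}}$ to transfer the finiteness conditions in both directions. Your explicit verification that the two finiteness conditions are interchanged by the unit and counit is exactly the content the paper leaves to the reader.
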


\begin{Rem}

One may work on locally free Lie algebroids $\mathcal{L}$ (of finite rank) over noetherian separated schemes (or schemes of finite types) over a field of characteristic zero \cite{UB,MK}.
The PBW theorem and the CMM theorem also hold in this case. On each affine pieces $U_i$, the $(\mathbb{K}, \mathcal{O}_{X}(U_i))$-Lie-Rinehart algebra $\mathcal{L}(U_i)$ is projective $\mathcal{O}_X(U_i)$-module and the stalks ${\mathcal{L}}_x$ are free $(\mathbb{K}, \mathcal{O}_{X,x})$-Lie-Rinehart algebras for every $x \in X$ (using Lemma 5.1.3. from\cite{CW}). We find the PBW isomorphisms of $\mathcal{O}_X(U_i)$-algebras
	$S_{\mathcal{O}_{X}(U_i)}\mathcal{L}(U_i) \overset{\theta_{U_i}}{\longrightarrow} \mbox{gr}(~\mathcal{U}(\mathcal{O}_{X}(U_i), \mathcal{L}(U_i))~), $
 which are compatible with restrictions. It provides PBW isomorphisms for $\mathcal{L}_x$ on the level of stalks and also the generalized PBW isomorphism.

Moreover, if we consider a locally free Lie algebroid of infinite rank, then the PBW isomorphisms on the level of stalks provides the generalized PBW isomorphism.
\end{Rem}
{\bf Acknowledgements:} \\
We acknowledge the research support from C. S. I. R fellowship grant $2015$ and SERB, DST, Government of India, MATRICS -Research grant  $MTR/2018/000510$. 
We sincerely thank the anonymous referee for making several useful and important suggestions, which helped to improve the  presentation and clarity of the article. 


\vspace{.25cm}
{\bf Ashis Mandal and  Abhishek Sarkar}\\
 Department of Mathematics and Statistics,\\
Indian Institute of Technology,
Kanpur 208016, India.\\
e-mail: amandal@iitk.ac.in, ~~ abhishsr@iitk.ac.in
\end{document}